\newtheorem{theorem}{Theorem}[section]
\newtheorem{lemma}[theorem]{Lemma}
\newtheorem{corollary}[theorem]{Corollary}
\theoremstyle{remark}
\newtheorem{remark}[theorem]{Remark}
\newtheorem{definition}[theorem]{Definition}
\newtheorem{notation}[theorem]{Notation}
\numberwithin{equation}{subsection}
\def\imod#1{\allowbreak\mkern10mu({\operator@font mod}\,\,#1)}
\newcommand{\cY}{\mathcal{Y}}
\newcommand{\cC}{\mathcal{C}}
\newcommand{\cX}{\mathcal{X}}
\newcommand{\cH}{\mathcal{H}}
\newcommand{\cL}{\mathcal{L}}
\newcommand{\cD}{\mathcal{D}}
\newcommand{\cZ}{\mathcal{Z}}
\newcommand{\CC}{\mathbb{C}}
\newcommand{\ZZ}{\mathbb{Z}}
\newcommand{\PP}{\mathbb{P}}
\newcommand{\QQ}{\mathbb{Q}}
\newcommand{\RR}{\mathbb{R}}
\newcommand{\sO}{\mathscr{O}}
\newcommand{\sMbar}{\overline{\mathscr{M}}}
\newcommand{\bq}{\mathbf{q}}
\newcommand{\bt}{\bm{t}}
\newcommand{\btau}{\bm{\tau}}
\newcommand{\ii}{\mathbb{1}}
\newcommand{\iI}{\mathbb{I}}
\newcommand{\jj}{\mathfrak{j}}
\newcommand{\bv}[1]{\mathbf{#1}}
\newcommand{\gr}{\textbf{Gr}}
\DeclareMathOperator{\age}{age}
\DeclareMathOperator{\diag}{diag}
\newcommand{\br}[1]{\left\langle#1\right\rangle}  
\title[Quantum Cohomology of Blowups]{Quantum Cohomology of Toric Blowups and Landau--Ginzburg Correspondences}
\author{Pedro Acosta}
\address{P.~Acosta, Department of Mathematics, University of Michigan,
Ann Arbor, MI 48109-1043, U.S.A.}
\email{peacosta@umich.edu}
\author{Mark Shoemaker}
\address{M.~Shoemaker, Department of Mathematics, University of Utah,
Salt Lake City, UT 84112-0090, U.S.A.}
\email{markshoe@math.utah.edu}
\begin{document}

\begin{abstract}
We establish a genus zero correspondence between the equivariant Gromov--Witten theory of the Deligne--Mumford stack $[\CC^N/G]$ and its blowup at the origin.  The relationship generalizes the crepant transformation conjecture of Coates--Iritani--Tseng and Coates--Ruan to the discrepant (non-crepant) setting using asymptotic expansion.  Using this result together with quantum Serre duality and the MLK correspondence we prove LG/Fano and LG/general type correspondences for hypersurfaces.
\end{abstract}

\maketitle
{\small
\tableofcontents}

\section{Introduction}

Given a birational map $f: \cY \dasharrow \cX$ between smooth complex varieties (or orbifolds), 
it is natural to ask if there exists a relationship between the Gromov--Witten theory of $\cX$ and $\cY$.  This question has a long history and has been studied by many authors (see for instance \cite{Ga,  Hu, BG, CIT, CR, HH,Ru}).
 For instance Hu proved in the symplectic setting that if $f$ is the blowup of a smooth point or a curve, many of the Gromov--Witten invariants remain unchanged.  
 
 In general however, the Gromov--Witten theory of $\cX$ and $\cY$ will \emph{not} be equal, and so the question remains whether or not it is possible to relate the two theories in any systematic manner.  In this paper we propose such a correspondence in terms of asymptotic expansion, and provide a proof in the case where $f: \cY \dasharrow \cX$ is a blowup of $[\CC^N/G]$ at the origin.
\newpage
\subsection{Asymptotic expansion}
\subsubsection{The crepant case}
A key insight comes from mirror symmetry, which suggests that for certain \emph{$K$-equivalent} varieties, the relationship between their Gromov--Witten theories is given by analytic continuation and quantization.  To be more precise, generating functions of Gromov--Witten invariants of $\cX$ should be related to generating functions of Gromov--Witten invariants of $\cY$ by analytic continuation and a (possibly quantized) symplectic transformation.  This relationship has been dubbed the crepant resolution (or more generally the crepant transformation) conjecture, and may be viewed as an instance of the McKay correspondence (see \cite{Re}) for Gromov--Witten theory.
The conjecture was proposed in various levels of generality by Li-Ruan \cite{LR}, Bryan--Graber \cite{BG}, Coates--Corti--Iritani--Tseng \cite{CIT}, and Coates--Ruan \cite{CR}.  
At this point the conjecture is well studied, and has been verified for a wide class of examples.  For genus zero correspondences see for instance  \cite{BG, CIT, CIJ, LLQW, GW} and for higher genus see \cite{CI, BCR, ILLW, Zhou}.

\subsubsection{The general case}

An important ingredient in the crepant transformation conjecture is the analytic continuation of generating functions of the respective invariants.  Thus a necessary component in this conjecture is that these generating functions, a priori given as power series in certain formal variables, \emph{are in fact analytic}, at least in some of the variables.  

This analyticity, so crucial in the above conjecture, is the first thing to fail for a general birational map $f: \cY \dasharrow \cX$.  In many cases one generating function will be analytic with an essential singularity at infinity, while the other generating function has radius of convergence equal to zero.  A solution to this obstacle was  discovered  by the first author in \cite{A} in the context of Landau--Ginzburg correspondences (see Section~\ref{sss:lgc}).  It was realized that in this context analytic continuation should be replaced by power series asymptotic expansion (see Section~\ref{the correspondence for toric blowups}).  

Our first main theorem concerns the case where $f: \cY \dasharrow \cX$ is a weighted blow-up of $\cX = [\CC^N/G]$ at the origin.  The precise assumptions on $\cX$ and $f$ are given in Section~\ref{s:BU}.  In the following, let $|\cX|$ denote the coarse space of $\cX$, and let $|f|: \cY \to |\cX|$ denote the map induced by $f$.  The discrepancy of $|f|$ is defined to be the coefficient of the exceptional divisor in $K_{\cY} - f^*(K_{|\cX|})$.
\begin{theorem}[see Theorems~\ref{theorem_asymptotic_correspondence} and~\ref{theorem_asymptotic_correspondence2}]
When $|f|: \cY \to |\cX|$ has positive discrepancy, the quantum cohomology of $\cY$ fully determines the quantum cohomology of $\cX$.  When $|f|: \cY \to |\cX|$ has negative discrepancy, the quantum cohomology of $\cX$ fully determines the quantum cohomology of $\cY$. 
\end{theorem}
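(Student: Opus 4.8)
The plan is to recast the statement within Givental's symplectic formalism for genus-zero Gromov--Witten theory and to prove the required comparison of Lagrangian cones through asymptotic expansion. Recall that the genus-zero equivariant theory of a target is encoded in a Lagrangian cone $\cL \subset \cH$ inside the infinite-dimensional Givental symplectic space, equivalently in the $J$-function, which parametrizes a slice of $\cL$. I would interpret the phrase ``the quantum cohomology of $\cY$ fully determines the quantum cohomology of $\cX$'' as the existence of an explicit, \emph{one-sided} reconstruction map between the cones $\cL_{\cX}$ and $\cL_{\cY}$, whose direction is dictated by the sign of the discrepancy. The asymmetry between the two cases then reflects the fact that asymptotic expansion is a one-directional operation: the function on the analytic side of the correspondence determines, term by term, the function on the divergent side, but not conversely.

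First I would produce explicit $I$-functions for both $\cX = [\CC^N/G]$ and its weighted blowup $\cY$ via the toric mirror theorems. Since both targets are toric Deligne--Mumford stacks, their $I$-functions are explicit hypergeometric-type series lying on the respective Givental cones, the blowup contributing one additional Kähler/Novikov parameter attached to the exceptional divisor. I would normalize these so that the two families are solutions of a single GKZ-type differential system, differing only by the chamber of the secondary fan in which they are expanded, with the wall between the two chambers corresponding to the passage from the origin to the blowup.

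The heart of the argument is the comparison of these two $I$-functions across that wall. In the crepant case the comparison is an honest analytic continuation realized by a symplectic transformation $\UU$, but here the discrepancy is nonzero and one of the two series has radius of convergence zero. I would therefore replace analytic continuation by the asymptotic-expansion statement of Section~\ref{the correspondence for toric blowups}: using a Mellin--Barnes integral representation for the convergent $I$-function, I would show that its power-series asymptotic expansion in the blowup parameter is precisely the formal $I$-function of the other target, and that the linear map recording this expansion is symplectic and carries one Lagrangian cone onto the other. The sign of the discrepancy controls which of the two $I$-functions is the convergent, determining one and which is merely asymptotic, and this is exactly the content of the two quoted theorems.

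The main obstacle I anticipate is promoting the asymptotic identity from the scalar $J$/$I$-function slice to the entire Lagrangian cone, together with the bookkeeping needed to establish the one-sided nature of the reconstruction. Concretely, I must verify that the transformation identified on $I$-functions extends to a symplectic isomorphism of Givental spaces matching $\cL_{\cY}$ with $\cL_{\cX}$ under asymptotic expansion, and that the remainder estimates in the Mellin--Barnes analysis are uniform enough to pass from the scalar statement to the full cone. Tracking the twisted-sector contributions of $\cX$ and of the exceptional divisor, and confirming that a nonzero discrepancy genuinely obstructs the reverse implication so that ``fully determines'' is sharp, will require the most care.
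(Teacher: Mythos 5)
There is a genuine gap, and it sits at the heart of your plan: the claim that the comparison map ``is symplectic and carries one Lagrangian cone onto the other,'' to be promoted to ``a symplectic isomorphism of Givental spaces matching $\cL_{\cY}$ with $\cL_{\cX}$,'' is false in the discrepant setting, and its failure is precisely the point of the theorem. Count dimensions: per $\bar{G}$-coset the state space of $\cY$ has dimension $\sum_{j}c_j$ (Notation~\ref{n:Gcosets2}) while that of $\cX$ has dimension $d$ (Notation~\ref{n:Gcosets1}); these differ exactly when the discrepancy $r/d=\sum_j q_j-1$ is nonzero. So the transformation $L$ of Theorem~\ref{theorem_asymptotic_correspondence} is a non-invertible linear map (in the positive-discrepancy case it annihilates, per coset, an $(\sum_j c_j-d)$-dimensional space of solutions --- those whose asymptotics at $q=\infty$ carry exponential factors), and no symplectomorphism identifying the two cones can exist. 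The paper's route from the $I$-function statement to ``fully determines quantum cohomology'' is not a cone equivalence at all: it is uniqueness of power-series asymptotic expansion combined with Birkhoff factorization (Lemma~\ref{l:BF}), which converts a big $I$-function into the $J$-function and hence the quantum product. Your proposal never isolates this step, and the cone-level statement you aim for cannot be ``verified.''

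The second problem is the direction of your analytic construction. You propose to start from the convergent function $I^{\cY}$ and extract its asymptotics by Mellin--Barnes contour shifting. But the Picard--Fuchs equation of $\cY$ has an irregular singularity at $q=\infty$, and a generic component of $I^{\cY}$ has exponentially large asymptotics there; the power-series (``algebraic'') part you want is invisible behind the exponential growth, so ``the linear map recording this expansion'' is not defined component by component --- identifying which linear combinations of components of $I^{\cY}$ are free of exponential terms is exactly the Stokes problem, and contour shifting alone does not solve it. The paper sidesteps this by working from the divergent side: it Borel-regularizes $I^{\cX}_{\bv{k}}$ into $I^{reg}_{\bv{k}}(\tau)$, which converges on a disk, satisfies a regularized Picard--Fuchs equation with only regular singular points (hence continues to $\tau=\infty$), takes the Laplace transform to produce a holomorphic $\iI_{\bv{k}}(u)$, obtains $\iI_{\bv{k}}(u=q^{1/r})\sim I^{\cX}_{\bv{k}}(t=q^{-1/d},z=1)$ from Watson's lemma, and then shows $\iI_{\bv{k}}$ satisfies the Picard--Fuchs equation of $\cY$ (Lemma~\ref{lemma_picard_fuchs}, Corollary~\ref{PF-corollary}); existence and uniqueness of $L$ then follow because the components of $I^{\cY}_{\bv{k}}$ are a complete solution set (Corollary~\ref{corollary_linear_k}), with the $\bar{G}$-coset decomposition and the operator identities~\eqref{e:hopeitsnotwrong} and~\eqref{e:remarkeq} handling the twisted-sector variables $\bt$. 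If you reverse the direction of your analytic argument (construct the solution from the divergent series by Borel summation rather than expanding the convergent one), and replace the cone-isomorphism goal by uniqueness of asymptotics plus Birkhoff factorization, your outline collapses onto the paper's proof; as written, both of its load-bearing steps fail.
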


The goal of this paper is to introduce the tool of asymptotic expansion as a way of relating generating functions of Gromov--Witten invariants of birational spaces.  We restrict ourselves here to the specific case described above for two reasons.  First, it allows us to state and prove the correspondence in a simple situation with (relatively) light notation.  Second, it is exactly the setting of relevance for Landau--Ginzburg correspondences, which we prove as an application of Theorems~\ref{theorem_asymptotic_correspondence} and~\ref{theorem_asymptotic_correspondence2}, and explain below.

We will prove a general version of the above theorem in the context of toric wall-crossing in a forthcoming paper.

\subsubsection{Landau--Ginzburg correspondences}\label{sss:lgc}

Given a non-degenerate quasi-homogeneous polynomial $W = W(X_1, \ldots, X_N)$ and an \emph{admissible} group $G$ \cite{FJR1}, one can define the corresponding \emph{FJRW} invariants, which may be viewed as the analogue of Gromov--Witten invariants for the singular space $\{W = 0\} \subset [\CC^N/G]$.  They are given as integrals over a cover of the moduli space of stable curves, and obey many of the same axioms as Gromov--Witten invariants.  

One may also view the pair $(W, G)$ as defining a hypersurface $\cZ: = \{W = 0\} \subset \PP(G)$, where $\PP(G)$ denotes a suitable quotient of weighted projective space.
Again inspired by mirror symmetry, the Landau--Ginzburg/Calabi--Yau correspondence is a conjectural relationship between the FJRW invariants of the pair $(W, G)$, and the Gromov--Witten theory of $\cZ$ in the case when $\cZ$ is a Calabi--Yau variety.  It was first verified for the quintic hypersurface in $\PP^4$  in genus zero \cite{ChR}, and has since been extended to all cases where $W$ is a Fermat polynomial and $G \leq SL_{N}(\CC)$ \cite{LPS}.

In \cite{A}, the first author extended this correspondence to the case where $\cZ$ was either a Fano or general type hypersurface in weighted projective space. Our second main theorem generalizes this to the case where $W$ is a Fermat polynomial: \[W = X_1^{d/c_1} + \cdots + X_N^{d/c_N}\] with $\gcd(c_1, \ldots, c_N)  = 1$,  and $W$ gives a section of a line bundle $\cL$ pulled back from the coarse space of $\PP(G)$.  The condition that $\sum_{j=1}^N c_j = d$ is exactly the condition necessary to guarantee that $\cZ$ is Calabi--Yau.  We deal here with the other cases.

\begin{theorem}[See Theorems~\ref{LG/Fano asymptotic correspondence} and~\ref{LG/gt asymptotic correspondence}]
If $\sum_{j=1}^N c_j - d > 0$, the quantum cohomology of the hypersurface $\cZ$ determines the quantum cohomology of the FJRW theory of $(W,G)$.  If $\sum_{j=1}^N c_j - d < 0$, the quantum cohomology of the FJRW theory of $(W,G)$ determines the quantum cohomology of $\cZ$.
%
\end{theorem}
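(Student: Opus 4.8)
The plan is to deduce the statement by composing three correspondences into a single chain and then tracking the direction of determination through it. The three links are: (i) the MLK correspondence, identifying the FJRW theory of $(W,G)$ with the (twisted) equivariant quantum cohomology of $\cX = [\CC^N/G]$; (ii) the asymptotic blowup correspondence of Theorems~\ref{theorem_asymptotic_correspondence} and~\ref{theorem_asymptotic_correspondence2}, relating the equivariant quantum cohomology of $\cX$ to that of its weighted blowup $\cY$ at the origin; and (iii) quantum Serre duality, converting the $\cL$-twisted quantum cohomology of $\cY$ into the quantum cohomology of the hypersurface $\cZ$ upon passing to the non-equivariant limit. Once the three links are in place, the two cases of the theorem are read off from the two cases of the blowup theorem.

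First I would fix the dictionary of signs. Working equivariantly with respect to a torus under which $W$ is homogeneous of weight $d$, I would compute $K_{\cY} - f^*K_{|\cX|}$ for the weighted blowup with exceptional divisor $E \cong \PP(G)$. The bare discrepancy contributes $\sum_j c_j$ (up to the universal normalization), while quantum Serre duality by the degree-$d$ bundle $\cL$ contributes $-d$; the quantity governing the direction of the asymptotic expansion is therefore the difference $\sum_j c_j - d$. By adjunction on $\PP(G)$ this is positive exactly when $\cZ$ is Fano and negative exactly when $\cZ$ is of general type, so that $\sum_j c_j - d > 0$ is the positive-discrepancy case of the blowup theorem and $\sum_j c_j - d < 0$ the negative-discrepancy case.

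Next I would set up the twist and the two end-point correspondences. Since $\cL$ is pulled back from the coarse space of $\PP(G)$ and $W$ is a section of $\cL$, one may twist by the equivariant Euler class of $\cL$ throughout. On the $\cX$-side the MLK correspondence presents the FJRW theory of $(W,G)$ as this twisted theory; on the $\cY$-side, because $\cY$ is the total space of a line bundle over $E \cong \PP(G)$ and $\cZ \subset \PP(G)$ is cut out by $W$, quantum Serre duality identifies the $\cL$-twisted equivariant quantum cohomology of $\cY$ with the ambient quantum cohomology of $\cZ$ after specializing the equivariant parameter. To carry the twist through the middle link I would use the equivariant form of Theorems~\ref{theorem_asymptotic_correspondence} and~\ref{theorem_asymptotic_correspondence2} together with the Coates--Givental twisting formalism, which applies because the twisting datum is pulled back from $E$.

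The crux, and the step I expect to be the main obstacle, is to check that the power-series asymptotic expansion produced by the blowup theorem is compatible with quantum Serre duality and the non-equivariant limit. I would argue this at the level of Givental's Lagrangian cones and $I$-functions, where each link is an explicit transformation: the asymptotic-expansion operator acts in the Kähler variable dual to $E$, whereas quantum Serre duality acts by an Euler-class factor and a reflection of the equivariant parameter, so the two act on disjoint groups of variables and commute, while the MLK transformation is a fixed linear isomorphism independent of the asymptotic parameter. The delicate points are to show that taking the non-equivariant limit does not destroy the asymptotic expansion, and that the state spaces match --- that the narrow part of the FJRW state space is carried to the ambient part of $H^*(\cZ)$ through the chain. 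Granting this, the determination statements transfer verbatim: in the Fano case the quantum cohomology of $\cY$ determines that of $\cX$, hence via (iii) and (i) the quantum cohomology of $\cZ$ determines the FJRW theory of $(W,G)$, and in the general type case all arrows reverse.
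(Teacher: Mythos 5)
Your overall inventory of ingredients (MLK, quantum Serre duality, the blowup asymptotics) matches the paper's, but the architecture of your argument --- composing the finished Theorems~\ref{theorem_asymptotic_correspondence} and~\ref{theorem_asymptotic_correspondence2} with the two endpoint correspondences --- is precisely what the paper states does \emph{not} work: Section~\ref{s:FJRW} opens by saying the Landau--Ginzburg correspondences are ``not strictly a consequence'' of the blowup theorems, and instead the \emph{proofs} of those theorems are rerun on new $I$-functions. There are two concrete obstructions to your composition. First, the limit interchange: the blowup theorems produce an asymptotic expansion as $q\rightarrow\infty$ at fixed, generic equivariant parameter $\lambda$ (the sector of validity and the linear map $L$ both depend on $\lambda$), whereas MLK and quantum Serre duality require the nonequivariant limit $\lambda\rightarrow 0$. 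Your claim that asymptotic expansion and quantum Serre duality ``act on disjoint groups of variables and commute'' is not true: $\lambda$ enters the QSD transformation (division by $-d(\lambda+H)$ followed by $\lambda\rightarrow 0$) as well as every coefficient of the expansion, so exchanging $q\rightarrow\infty$ with $\lambda\rightarrow 0$ needs a uniformity you have not established, and nothing guarantees that $L$ itself has a nonequivariant limit compatible with the reduction.

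Second, the state-space reduction is not formal. The maps $\Delta^\circ$ and $i^*$ have nontrivial kernels (broad sectors on the FJRW side, top powers of $H$ on the hypersurface side), so the unique $L$ of the blowup theorems does not induce a map $H^{amb}_{CR}(\cZ)\rightarrow \cH_{FJRW}(W,G)$ without further argument; in particular, uniqueness of a linear transformation is a statement relative to a \emph{complete} set of solutions of a differential equation, and completeness does not descend to subquotients automatically. The paper handles this by a genuinely new computation: it first produces $I^{(W,G)}$ and $I^{\cZ}$ explicitly from $I^{\cX}$ and $I^{\cY}$ via MLK and QSD (taking $\lambda\rightarrow 0$ at the outset), shows both are annihilated by a common \emph{reduced} Picard--Fuchs operator $\cD^{irr}_{t,\bv{g}}$ obtained by the factorization \eqref{e:PFred}, checks that the degree of this reduced operator equals the reduced dimension so that the components of $I^{\cZ}_{\bv{g}}$ form a complete solution set, and only then reruns the Borel-summation machinery (regularization, Laplace transform, Watson's lemma) blockwise to obtain the unique $L_{\bv{g}}$ as in Lemma~\ref{Lg for LG/Fano}, finally assembling $L^{GW}=\bigoplus_{\bv{g}\in\bar{G}}L_{\bv{g}}$. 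This factorization and dimension count is the missing idea in your proposal; without it neither the existence nor the uniqueness of your composed transformation follows.
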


\subsection{What is the correspondence?}

In the statements of the above theorems, we have been intentionally vague in saying how the quantum cohomology of one theory determines the other.  The precise statement is phrased in terms of generating functions of invariants.  
Consider the case where $f: \cY \dasharrow \cX$ is the blowup from above.
We consider certain generating functions, $I^\cY(q, z)$ and $I^\cX(t, z)$, of genus zero Gromov--Witten invariants of $\cX$ and $\cY$ (see Section~\ref{ss:gf}).  It is known that these functions fully determine the genus zero Gromov--Witten theory of $\cX$ and $\cY$ respectively.
In the case where $\cX$ and $\cY$ are $K$-equivalent, the crepant transformation conjecture is proven by showing that under the identification $t = q^{-d}$, there exists a unique linear transformation $L$ such that the analytic continuation of $L \cdot I^\cY(q, z)$ to a neighborhood of $t = 0$ yields the function $I^\cX(t, z)$.  

Consider now the case where $|f|: \cY \to |\cX|$ has positive discrepancy.  Then $I^\cX(t, z)$ will no longer be an analytic function.  The correct analogue of the above theorem is to show there exists a unique linear transformation $L$ such that the power series asymptotic expansion of $L \cdot I^\cY(q, z)$ recovers $I^\cX(t, z)$.  Because the power series asymptotic expansion of a function is uniquely determined (if it exists), this implies that $I^\cX(t, z)$ is uniquely determined by $I^\cY(q, z)$.  Because the $I$-functions fully determine the respective genus zero Gromov--Witten theories, this shows that the Gromov--Witten theory of $\cY$ fully determines the Gromov--Witten theory of $\cX$.  It is within this framework that all of our theorems are proven.

In a similar spirit, H. Iritani has also announced results relating the genus zero Gromov--Witten theory of $f: \cY \to \cX$ when $f$ is a blow-up of a toric variety.  His results are phrased however in terms of the quantum connection, and so not directly related to our statements on generating functions.  It will be interesting to understand the relationship between these respective frameworks.

\subsection{Acknowledgments}
The authors would like to thank H. Iritani for many useful conversations and for his talks on the crepant transformation conjecture.  They are also grateful to their advisor, Y. Ruan, for teaching them much of what they know about Gromov--Witten and FJRW theory. P.~A. would like to thank D. Ross for his interest in this work and for fruitful conversations. He also acknowledges the Algebraic Geometry group at the University of Utah for their hospitality during his visit in February of 2015. M.~S. would like to thank Y.-P. Lee for initially suggesting the MLK correspondence, on which the present paper relies.
M.~S.\ was partially supported by NSF RTG Grant DMS-1246989.

\section{An example}  
Consider the space $[\CC^3/\ZZ_2]$ with the diagonal action, and its resolution $\text{Tot}(\sO_{\PP^2}(-2))$.  In this section we will give an explicit computation of the relationship between the genus zero Gromov--Witten theory of these spaces.  This will serve to illustrate the general principle in a simple example. The $I$-function for the line bundle $\text{Tot}(\sO_{\PP^2}(-2))$ is given by
\[
I^{\text{Tot}(\sO_{\PP^2}(-2))}(q,z):=z\sum_{n\geq 0}q^{n+H/z}\frac{\prod_{l=0}^{2n-1}(-2(H+\lambda)-lz)}{\prod_{l=1}^n (H+lz)^3},
\]
where $H$ satisfies $H^3=0$ and $\lambda$ is the equivariant parameter of the torus action of $\CC^*$ on $\text{Tot}(\sO_{\PP^2}(-2))$.  A simple ratio test argument shows that up to a choice of branch cut for $\log(q)$, this series is holomorphic everywhere in the complex plane. From this it follows that the $I$-function cannot be extended holomorphically to the point $q=\infty$. It is still possible, however, to try to understand its asymptotic behavior as $q\rightarrow\infty$. The main claim of this example is that up to a linear transformation, the asymptotic behavior of $I^{\text{Tot}(\sO_{\PP^2}(-2))}$ as $q\rightarrow\infty$ is given by the $I$-function of $[\CC^3/\ZZ_2]$, i.e. there exists a unique linear transformation $L:H_{\CC^{\ast}}^{\ast}(\sO_{\PP^2}(-2))\longrightarrow H_{CR,\CC^{\ast}}^{\ast}([\CC^3/\ZZ_2])$ such that
\begin{equation}\label{example_asymptotics}
L\cdot I^{\text{Tot}(\sO_{\PP^2}(-2))}(q,z=1)\sim I^{[\CC^3/\ZZ_2]}(t=q^{-1/2},z=1)\quad\text{as $q\rightarrow\infty$.}
\end{equation}
Here $I^{[\CC^3/\ZZ_2]}$ is the $I$-function for the quotient $[\CC^3/\ZZ_2]$ given by the following formal series
\[
I^{[\CC^3/\ZZ_2]}(t,z):=z\sum_{k=0,1}\sum_{m\geq 0} t^{2m+k+2\lambda/z}\frac{\prod_{l=0}^{m-1}(-\lambda-(k/2+l)z)^3}{z^{2m+k}(2m+k)!}\ii_k,
\]
where $\ii_0$ and $\ii_1$ are the fundamental classes of the untwisted and twisted sectors of the inertia stack of $[\CC^3/\ZZ_2]$ respectively. The lesson we rescue from this result is that even though we no longer have an equivalence of local theories as in the crepant case, it is possible to obtain the genus zero theory of the singular quotient $[\CC^3/\ZZ_2]$ in terms of the asymptotics of the genus zero theory of its blow-up $\text{Tot}(\sO_{\PP^2}(-2))$.

To establish Equation \ref{example_asymptotics} we employ a technique known as \emph{Borel summation} (for a simple example see \cite[pp. 246-249]{Miller}), which consists of the following steps:

\textbf{Step 1:} \emph{Regularize the} $I$-\emph{function for the quotient stack} $[\CC^3/\ZZ_2]$. We define $I_{reg}(\tau)$ as
\[
I_{reg}(\tau):=\sum_{k=0,1}\sum_{m\geq 0} \tau^{m+k/2+\lambda}\frac{\prod_{l=0}^{m-1}(-\lambda-(k/2+l))^3}{\Gamma(1+m+k/2+\lambda)(2m+k)!}\ii_k.
\]
This series defines a holomorphic function for $\tau$ in the disk of radius $4$ centered at the origin. It is also straightforward to check that $I_{reg}(\tau)$ satisfies the following linear differential equation:
\begin{equation}\label{example_regularized_equation}
\left[\tau\left(\tau\frac{d}{d\tau}\right)^3+\left(2\tau\frac{d}{d\tau}-2\lambda\right)\left(2\tau\frac{d}{d\tau}-2\lambda-1\right)\left(\tau\frac{d}{d\tau}\right)\right]I_{reg}(\tau)=0.
\end{equation}
This differential operator has singular points for $\tau\in\{0,-4, \infty\}$ 
all of which are regular. It follows that $I_{reg}(\tau)$ can be analytically continued in the region of the complex plane defined by $|\arg\tau |<\pi$.

\textbf{Step 2:} \emph{Apply a Laplace transformation to $I_{reg}(\tau)$ and use Watson's lemma.} Since $I_{reg}(\tau)$ can be analytically continued to $\tau=\infty$, the following Laplace integral defines a holomorphic function:
\[
\iI(q):=q\mathcal{L}(I_{reg})(q):=q\int_0^{\infty}e^{-q\tau}I_{reg}(\tau)d\tau,
\]
where the ray on which we integrate avoids the singular point $\tau=-4$. As a consequence of Watson's lemma \cite{Miller}, we obtain the following asymptotic expansion for $\iI(q)$
\[
\begin{split}
\iI(q)\sim& \sum_{k=0,1}\sum_{m\geq 0} q^{-m-k/2-\lambda}\frac{\prod_{l=0}^{m-1}(-\lambda-(k/2+l))^3}{(2m+k)!}\ii_k\\
=&I^{[\CC^3/\ZZ_2]}(t=q^{-1/2},z=1)
\end{split}
\]
as $q\rightarrow\infty$. The upshot of this step is that we have constructed a holomorphic function $\iI(q)$ whose asymptotic expansion is given by the $I$-function of $[\CC^3/\ZZ_2]$.

\textbf{Step 3:} \emph{Show that $\iI(q)$ satisfies the Picard-Fuchs equation of $\text{Tot}(\sO_{\PP^2}(-2))$.} Expanding Equation \ref{example_regularized_equation} we obtain
\begin{multline*}
\left[(\tau^3+4\tau^2)\frac{d^3}{d\tau^3}+\left(3\tau^2+(10-8\lambda)\tau\right)\frac{d^2}{d\tau^2}\right.\\
+\left.\left(\tau+(2-2\lambda)(2-2\lambda-1)\right)\frac{d}{d\tau}\right]I_{reg}(\tau)=0.
\end{multline*}
Applying a Laplace transform to this yields
\begin{multline*}
-\frac{d^3}{dq^3}(q^2\iI(q))+\frac{d^2}{dq^2}(3q\iI(q)+4q^2\iI(q))
\\
-\frac{d}{dq}(\iI(q)+(10-8\lambda)q\iI(q))+(2-2\lambda)(1-2\lambda)\iI(q)=0,
\end{multline*}
(where we have assumed that $\Re(\lambda)\geq3$) which in turn is equivalent to
\[
\left[\left(q\frac{d}{dq}\right)^3-q\left(-2q\frac{d}{dq}-2\lambda\right)\left(-2q\frac{d}{dq}-2\lambda-1\right)\right]\iI(q)=0.
\]
This is precisely the Picard-Fuchs equation satisfied by $I^{\text{Tot}(\sO_{\PP^2}(-2))}(q,z=1)$. Since the components of $I^{\text{Tot}(\sO_{\PP^2}(-2))}$ are a complete set of solutions to this differential equation, there exists a unique linear transformation $L$ satisfying $L\cdot I^{\text{Tot}(\sO_{\PP^2}(-2))}(q,z=1)=\iI(q)$. Equation \ref{example_asymptotics} follows from this.
\begin{remark}
In order to compute the asymptotic expansion of $I^{\cY}$ we had to set $z$ equal to $1$ in the $I$-functions. We can recover the powers of $z$ in $I^{\cX}$ by means of the following procedure. Define a grading operator $\bv{Gr}$ by $\bv{Gr}(\lambda)=\lambda$, $\bv{Gr}(\ii_k)=\frac{3}{2}\ii_k$, then
\[
I^{[\CC^3/\ZZ_2]}(t,z)=z^{1-\bv{Gr}}z^{-\lambda}I^{[\CC^3/\ZZ_2]}(tz^{1/2},1).
\]
\end{remark}
\begin{remark}
Equation \ref{example_asymptotics} and Birkhoff factorization imply that the genus zero Gromov-Witten invariants of $[\CC^3/\ZZ_2]$ are completely determined by the genus zero Gromov-Witten invariants of $\text{Tot}(\sO_{\PP^2}(-2))$.
\end{remark}
\section{Gromov--Witten theory}\label{s:formal}

Here we review the basic definitions of (orbifold) Gromov--Witten theory and set notation.  For a reference see \cite{AGV} in the algebraic setting or \cite{ChenR3} in the symplectic.

\subsection{ Notation}
Let $\cX$ denote a smooth Deligne--Mumford stack with an equivariant action by a torus $T \cong (\CC^*)^r$.  Assume that the fixed point loci of $\cX$ is projective.  Let $H^*_{CR,T}(\cX) := H^*_{CR,T}(\cX; \CC)$ denote the equivariant Chen--Ruan orbifold cohomology of $\cX$ \cite{ChenR1}.  Recall that as a vector space, $H^*_{CR,T}(\cX) \cong H^*_T(I\cX)$ where $I\cX$ denotes the \emph{inertia stack}, parametrizing pairs $(x, g)$ where $x$ is a point in $\cX$ and $g \in G_x$ is an element of the isotropy group of $x$.  $I\cX$ is a disjoint union of connected components $I\cX = \coprod_{v \in V} \cX_v$ where each \emph{twisted sector} $\cX_v$ may be identified with a closed substack of $\cX$.  There is a distinguished component $\cX_{id}$ corresponding to the points $(x, id)$ which is isomorphic to $\cX$ itself.  We call this the \emph{untwisted sector} of $I\cX$.
Thus 
as a vector space
\[H^*_{CR,T}(\cX) \cong \bigoplus_{v \in V} H^*_T(\cX_v),\]
and by identifying the untwisted sector with $\cX$ itself we may view $H^*_T(\cX)$ as a summand of $H^*_{CR,T}(\cX)$.  The cohomology ring $H^*_{CR,T}(\cX)$ is a module over $R_T := H^*_T(\text{pt})$.

There is a natural involution map $inv: I\cX \to I\cX$ which sends $(x, g)$ to $(x, g^{-1})$.  We use this to define a pairing
\[ ( \alpha, \beta ) := \sum_{v \in V} \int_{\cX_v} \alpha \cup inv^*\beta
\] for $\alpha, \beta \in H^*_{CR,T}(\cX)$.

Given $\alpha_1, \ldots, \alpha_n$ elements of $H^*_{CR,T}(\cX)$ and integers $a_1, \ldots, a_n \in \ZZ_{\geq 0}$, we denote the \emph{Gromov--Witten invariant}
\[\br{ \psi^{a_1} \alpha_1, \ldots, \psi^{a_n} \alpha_n }_{g, n, d}^\cX := \int_{[\sMbar_{g, n}(\cX; d)]^{vir}} \prod_{i = 1}^n \psi_i^{a_i} ev_i^*(\alpha_i).
\]
Here $d \in NE(X)_\ZZ = NE(X) \cap H_2(|\cX|; \ZZ)$, $\sMbar_{g, n}(\cX; d)$ is the moduli space of stable maps of degree $d$ from  a genus $g$ orbi-curve with $n$ marked points into $\cX$, and $[ - ]^{vir}$ denotes the virtual class \cite{AGV}.  Finally, $\psi_i$ denotes the $\psi$-class at the $i$th marked point.  In the case where $\cX$ (and therefore possibly $\sMbar_{g, n}(\cX; d)$) is not proper, the above integral is defined via the (virtual) Atiyah--Bott localization formula \cite{GrP}.  In this case the invariant is defined only after inverting suitable equivariant parameters.

\subsection{Quantum cohomology}
Fix a basis $\{\phi_i\}_{i \in I}$ for $H^*_{CR,T}(\cX)$ such that for some subset $J \subset I$, $\{\phi_j\}_{j \in J}$ gives a basis of $H^2_T(\cX)$, the degree two classes in the untwisted sector.

We may express a general point in $H^*_{CR,T}(\cX)$ as $\bt = \sum_{i \in I} t^i \phi_i$.  It will be convenient to use double bracket notation, given $\alpha_j \in H^*_{CR,T}(\cX)$ and $a_j \in \ZZ_{\geq 0}$ as above, define
\[\br{\br{ \psi^{a_1} \alpha_1, \ldots, \psi^{a_n} \alpha_n }}^\cX:=
\sum_{d \in NE(X)_\ZZ} \sum_{k = 0}^\infty \frac{Q^d}{k!}
\br{ \psi^{a_1} \alpha_1, \ldots, \psi^{a_n} \alpha_n, \bt, \ldots, \bt
}_{0, n+k, d}^\cX\]
where on the right hand side we declare the degree zero Gromov--Witten invariants with one or two marked points to be zero, since the corresponding moduli spaces are empty.  In the above, the variables $Q^d$ are so-called Novikov variables, used to guarantee convergence of the sum.  

\begin{definition}
The quantum product $*_{\bt}: H^*_{CR,T}(\cX) \times H^*_{CR,T}(\cX) \to H^*_{CR,T}(\cX)$ is given by
\[( \alpha *_{\bt} \beta, \gamma) = 
\sum_{d \in NE(X)_\ZZ} \sum_{n = 0}^\infty \frac{1}{n!}
\br{ \br{ \alpha, \beta, \gamma}}^\cX
.\]
\end{definition}

By the divisor equation \cite{CK}, if we view the $t^i$ as formal variables, 
the specialization of the product obtained by setting $Q=1$ 
yields a well defined element of $H^*_{CR, T}(\cX)[[ \{t^i\}_{i \in I \setminus J}, \{e^{t_j}\}_{j \in J}]].$  
We apply this specialization in the sequel without further comment.
The quantum cohomology ring thus yields a formal deformation of $H^*_{CR, T}(\cX)$.

With this one may define the \emph{Dubrovin connection} 
\[\nabla_i: H^*_{CR, T}(\cX)[[ \{t^i\}_{i \in I \setminus J}, \{e^{t_j}\}_{j \in J}]]((z^{-1})) \to H^*_{CR, T}(\cX)[[ \{t^i\}_{i \in I \setminus J}, \{e^{t_j}\}_{j \in J}]]((z^{-1}))\] by \[
 \nabla_i :=  \frac{\partial}{\partial t^i} + z^{-1} \phi_i *_{\bt} - \hspace{.5 cm} \text{ for each $i \in I$.}
\]

\subsection{Generating functions}\label{ss:gf}

\begin{definition} The Givental $J$-function of $\cX$ is the cohomology-valued generating function of genus zero Gromov--Witten invariants given by
\[J^\cX(\bt, z) = z + \bt + \sum_{i \in I} 
\br{ \br{\frac{\phi_i}{z - \psi_1}}}^\cX \phi^i,
\]
where $\{\phi^i\}$ is the dual basis to $\{\phi_i\}$, and the expression $\frac{1}{z - \psi}$ is shorthand for the corresponding expansion in $1/z$.  
\end{definition}
The $J$-function also makes sense in the specialization $Q=1$.  Thus we can safely forget the Novikov variables.

The $J$-function arises naturally as a row of the solution matrix to the Dubrovin connection \cite{CK}.
For our purposes it is enough to note that the
quantum cohomology of $\cX$ is fully determined by the $J$-function.
The key point is that $J^\cX$ satisfies the system of partial differential equations
\[ z \frac{\partial}{\partial t^i}\frac{\partial}{\partial t^j} J^\cX(\bt, z)
= \sum_{k \in I}(\phi_i *_{\bt} \phi_j, \phi^k) \frac{\partial}{\partial t^k}J^\cX(\bt, z),
\]
which follows from the \emph{topological recursion relations}.


While the $J$-function for a given $\cX$ is often hard to calculate explicitly, we can instead work with so-called $I$-functions which have the advantage of being computable in many cases.  First, we upgrade the $J$-function to an endomorphism ${\bf J}^\cX(\bt, z): H^*_{CR, T}(\cX)((z^{-1}))[[\bt]] \to H^*_{CR, T}(\cX)((z^{-1}))[[\bt]]$ via:
\[{\bf J}^\cX(\bt, z): Y(\bt, z) \mapsto 
Y(\bt, z) + \sum_{i \in I} 
\br{ \br{ \frac{\phi_i}{z - \psi_1}, Y(\bt, z)
}}^\cX \phi^i
.\]
Note that $z{\bf J}^\cX(\bt, z)(1)$ recovers the original $J$-function via the string equation.

\begin{definition}\label{d:I function}
Let $q^1, \ldots, q^r$ be formal parameters.
An \emph{$I$-function} of $\cX$ is any cohomology-valued function of the form
\begin{equation}\label{Ifctn}
I^\cX(\bq, z) = z{\bf J}^\cX(\btau(\bq), z)(Y(\bq, z)),
\end{equation}
such that $Y(\bq, z) \in H^*_{CR, T}(\cX)[z][[\bq]]$ contains only \emph{positive} powers of $z$.
The map $\bq \mapsto \btau(\bq)$ is called the \emph{mirror map}.
\end{definition}

\begin{definition}
We say an $I$-function $I^\cX(\bq, z)$ is \emph{big} if there exist differential operators $\{P_i(z, z \frac{\partial}{\partial q^j})\}_{i \in I}$ which are polynomial in $z$ and $z \frac{\partial}{\partial q^j}$ such that 
\[z^{-1}P_i\left(I^\cX(\bq, z)\right) = \phi_i + O(\bq).\]
\end{definition}
The importance of big $I$-functions is explained in the following lemma.
\begin{lemma}\label{l:BF}
A big $I$-function $I^\cX(\bq, z)$ explicitly determines the $J$-function $J^\cX(\bq, z)$ (or rather its pullback under the mirror map).
\end{lemma}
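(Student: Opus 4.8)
The plan is to use Givental's Lagrangian cone formalism together with Birkhoff factorization. Write $S(\bt,z) := {\bf J}^\cX(\bt,z)$ for the fundamental solution operator, so that $J^\cX(\bt,z) = zS(\bt,z)(1)$ by the string equation, and so that the topological recursion relations recorded above are equivalent to the statement that $z\frac{\partial}{\partial t^a}(S\gamma) = \phi_a *_{\bt}(S\gamma)$ for every $z$-independent class $\gamma$. By Definition~\ref{d:I function} we have $I^\cX(\bq,z) = zS(\btau(\bq),z)(Y(\bq,z))$ with $Y\in H^*_{CR,T}(\cX)[z][[\bq]]$ a series in nonnegative powers of $z$; equivalently, for each $\bq$ the function $I^\cX$ is a point of the Givental cone $\cL\subset\cH$ lying on the ruling over $\btau(\bq)$ (see \cite{CK}).

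The key structural input I would isolate first is that the tangent space $T_{\btau(\bq)} := S(\btau(\bq),z)\big(H^*_{CR,T}(\cX)[z][[\bq]]\big)$ to $\cL$ depends only on $\btau(\bq)$ and is preserved both by multiplication by $z$ and by the operators $z\frac{\partial}{\partial q^j}$; this closure property is a reformulation of the genus-zero topological recursion relations. Granting it, any differential operator $P_i(z, z\frac{\partial}{\partial q^j})$ applied to the cone point $I^\cX$ lands again in $T_{\btau(\bq)}$, whence $z^{-1}P_i(I^\cX) \in z^{-1}T_{\btau(\bq)}$. The bigness hypothesis then says these elements equal $\phi_i + O(\bq)$, so that $\{z^{-1}P_i(I^\cX)\}_{i\in I}$ is a basis of $H^*_{CR,T}(\cX)[[\bq]]$ specializing to $\{\phi_i\}$ at $\bq = 0$.

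Next I would use this basis to reconstruct the quantum product along the image of the mirror map: expanding $z\frac{\partial}{\partial q^j}\big(z^{-1}P_i(I^\cX)\big)$ in the basis $\{z^{-1}P_k(I^\cX)\}$ and reading off the $\CC[[\bq]]$-coefficients produces the connection matrix of the Dubrovin connection pulled back under $\btau$, which the chain rule expresses as the Jacobian $\big(\frac{\partial\tau^a}{\partial q^j}\big)$ composed with the multiplication operators $\phi_a *_{\btau}$. Bigness is exactly what guarantees that enough of these operators are recovered to determine $*_{\btau}$ for $\btau$ in the (dominant) image of the mirror map. Finally I would invoke Birkhoff factorization: on each ruling $zT_{\btau}\cap\cL$ the function $J^\cX(\btau,z) = zS(\btau,z)(1)$ is the unique point of the normalized form $z + \btau + O(z^{-1})$, and it is extracted from the reconstructed fundamental solution by the standard, and unique, factorization of $I^\cX = zS(\btau)Y$ as an operator $\mathrm{Id}+O(z^{-1})$ times a series in nonnegative powers of $z$; pulling back under the mirror map then yields $J^\cX(\btau(\bq),z)$ from $I^\cX(\bq,z)$.

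The main obstacle I anticipate is the structural step that $z\frac{\partial}{\partial q^j}$ preserves the tangent space $T_{\btau(\bq)}$, together with the bookkeeping that shows how bigness lets the operators $P_i$ recover the full rather than merely the small quantum product: one must verify that the Jacobian of the mirror map, combined with the $P_i$, spans enough directions to pin down $\phi_a *_{\btau}$ for all $a$, and that the Birkhoff factorization producing $S$ from $I^\cX$ is unique. The uniqueness of the $J$-function normalization on each ruling, and the fact that the positive-$z$ factor $Y$ does not interfere with this factorization, are precisely what make the word \emph{determines} literal.
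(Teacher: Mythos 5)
Your proposal is correct and is essentially the paper's own argument (the paper cites the proof of \cite[Theorem~5.15]{CIJ}): it likewise uses the topological recursion relations, in the equivalent form $\frac{\partial}{\partial q^j}{\bf J}^\cX(\btau(\bq), z) = {\bf J}^\cX(\btau(\bq), z) (\btau^*\nabla)_j$ (your statement that $z\frac{\partial}{\partial q^j}$ preserves $S(\btau(\bq),z)\bigl(H^*_{CR,T}(\cX)[z][[\bq]]\bigr)$), to show that the matrix with columns $z^{-1}P_i(I^\cX)$ factors as ${\bf J}^\cX(\btau(\bq), z) \circ {\bf Y}(\bq, z)$ with ${\bf Y}$ containing only nonnegative powers of $z$, and then recovers ${\bf J}^\cX$ (hence $J^\cX$) as the unique Birkhoff factorization of that matrix, computed recursively in $\bq$. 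The only differences are packaging: your Lagrangian-cone language and the intermediate reconstruction of the quantum product via the connection matrix are bypassed in the paper, which goes directly to the matrix factorization.
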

\begin{proof}  This fact is contained in the proof of \cite[Theorem~5.15]{CIJ}.
The key is to use Birkhoff Factorization.  Consider the matrix, ${\bf I}^\cX(\bq, z)$, whose $i$th column is given by $z^{-1}P_i(z, z \frac{\partial}{\partial q^j}) I^\cX(\bq, z)$.  It follows from the topological recursion relations that 
$\frac{\partial}{\partial q^j}{\bf J}^\cX(\btau(\bq), z) = {\bf J}^\cX(\btau(\bq), z) (\btau^*\nabla)_j$, where $\btau^*\nabla$ is the pullback of the Dubrovin connection.  Therefore, 
\[{\bf I}^\cX(\bq, z) = {\bf J}^\cX(\btau(\bq), z) \circ \bf{Y}(\bq, z),\]
where the $i$th column of  $\bf{Y}(\bq, z)$ is defined to be $z^{-1}P_i(z, z (\btau^*\nabla)_j) Y^\cX(\bq, z)$.  Notice that the right hand side of the above equation is the Birkhoff factorization of the left hand side.  One may compute the right hand side recursively by expanding the above equation with respect to $\bq$.
\end{proof}

In what follows, we will work with $I$-functions, as it is generally difficult to obtain a closed form for the $J$-function.

\section{Weighted blowups of $[\CC^N/G]$}\label{s:BU}
In this section we introduce the specific spaces of interest and describe the corresponding $I$-functions.

We will restrict ourselves to the particular blowups of relevance to the Landau--Ginzburg correspondences of Section~\ref{s:FJRW}.  Although the proof holds for more general toric blowups with minimal modification, this simplifies the notation and exposition.

In particular, we consider birational spaces $\cY \dasharrow \cX$ where $\cX$ is a quotient stack  of the form $[\CC^N/G]$ and $\cY$ is a line bundle over weighted projective space obtained as a blowup of $\cX$.  In anticipation of our application to FJRW theory, we require that $G$ arises as a subgroup of the diagonal automorphisms of the Fermat polynomial 
\[W = X_1^{d/c_1} + \cdots + X_N^{d/c_N}\] where $\gcd(c_1, \ldots, c_N)  = 1$.
%
We assume that $G$ contains the distinguished automorphism 
\[\jj = \exp(2\pi i \diag (q_1, \ldots, q_N)),
\] 
where $q_j = c_j/d$ are the \emph{fractional weights} of $W$.  

\subsection{A toric description of the spaces}
Let $\bv{N} \cong \ZZ^N$ be a lattice, and let $\Sigma \subset \bv{N} \otimes \RR$ denote a fan such that $X_\Sigma$ is isomorphic to $\cX = [\CC^N/G]$.  $\Sigma$ contains a single maximal cone with generators $b_1, \ldots, b_N$, and $G \cong \bv{N}/\langle b_1, \ldots, b_N \rangle$.
Let 
\[\text{Box}(\Sigma) = \left\{ b' = \sum_{i=1}^N m_i b_i \in \bv{N} \; \vline\; m_i \in \QQ \cap [0,1) \text{ for } 1 \leq i \leq N\right\}.
\]
Note that the elements of $\text{Box}(\Sigma)$ are in bijection with those in $G$, and therefore index components of $I\cX$. 

The element $\jj \in G$ corresponds to the point $b' = \sum_{i=1}^N q_i b_i \in \text{Box}(\Sigma)$.  
Let $\Sigma '$ denote the star subdivision of $\Sigma$ obtained by adding the ray generated by $b'$.  Then $\cY := X_{\Sigma '}$ is isomorphic to the total space of the vector bundle $\sO_{\PP(G)}(-d)$ over the stack $\PP(G) := [\PP(c_1, \ldots, c_N)/\bar G]$ where $\bar G = G/\langle \jj \rangle$.  Thus $\cY$ gives a partial resolution of the coarse space of $\cX$.

One can easily check that the discrepancy of the toric morphism $|f|: \cY \to |\cX|$ is given by $\text{disc}(|f|) = \sum_{j=1}^N q_j - 1$.  This will play an important role in the what follows.

We endow $\cX$ and $\cY$ with compatible torus actions.  Let $T \cong \CC^*$ act on the coordinates of $\cX = [\CC^N/G]$ with weights $-c_1, \ldots, -c_N$.  On $\cY = \text{Tot}(\sO_{\PP(G)}(-d))$ this corresponds to a trivial action on the base $\PP(G)$ with a nontrivial action of weight $d$ in the fiber direction.  We let $\lambda$ denote the equivariant parameter of our torus action in $H^*_{CR,T}(\cX)$ and $H^*_{CR,T}(\cY)$.

%

\begin{notation}
For $g \in G$, we may express the action of $g$ on $\CC^N$ by \[\exp(2\pi i \diag (m_1(g), \ldots, m_N(g)))\] where $m_1(g), \ldots, m_N(g) \in \QQ \cap [0,1)$.  We call $m_j(g)$ the \emph{multiplicity} of $g$.  The \emph{age of $g$} is defined as $\sum_{j=1}^N m_j(g)$.
\end{notation}

Note that for any element $g \in G$, the fact that $G$ preserves the polynomial $W$ implies that $m_j(g)$ is either 0 or greater than or equal to $q_j$ for $1\leq j \leq N$.  This implies that $G$ splits as  $G \cong \langle \jj \rangle \oplus \bar{G}$.
Let us once and for all fix a splitting of $G$.  Choose generators $g_1, \ldots, g_k$ of $\bar{G}$ such that each $g_i$ fixes the first coordinate of $\CC^N$ (this is again possible due to the restrictions on the multiplicities $m_1(g_i)$) and $G \cong \langle \jj \rangle \oplus \langle g_1 \rangle \oplus \cdots \oplus \langle g_k \rangle$.   

\begin{notation}\label{n:S}
We let $\bar{G}$ denote the group generated by $g_1, \ldots, g_k$ from above.
\end{notation}

\subsection{I-functions}

\subsubsection{The $I$-function of $\cX$}

The inertia stack $I\cX$ is a disjoint union of components $\cX_g$ indexed by $g \in G$.  There is a natural choice of basis for the equivariant cohomology of $\cX$ given by $\{\ii_g\}_{g \in G}$, where $\ii_g$ is the fundamental class of $\cX_g$.  By abuse of notation we will also use $\ii_g$ to denote the fundamental class of the $g$th component of the inertia stack of $BG$.

\begin{notation}\label{n:Gcosets1}
It will be convenient to separate components of $I\cX$ according to $\bar G$ cosets.  Namely, 
\[ I\cX = \bigcup_{g \in \bar{G}} \bigcup_{0 \leq k \leq d-1} \cX_{\jj^k g}.\]  Let $H^*_g(\cX) : = H^*_T(\bigcup_{0 \leq k \leq d-1} \cX_{\jj^k g})$ denote the corresponding subspace in $H^*_{CR,T}(\cX)$.  We see that this has dimension $d$.
\end{notation}

Let $t^{g}$ denote the dual coordinate of $\tilde \ii_{g}$ for $g \in \bar{G}$, and let $t$ denote the dual coordinate of $\jj$.

We consider the $J$-function of $BG$, where the domain has been restricted to the span of $\{\ii_\jj\} \cup \{\ii_{g}\}_{g \in \bar{G}}$.  A simple computation involving $\psi$-classes on $\sMbar_{0, n}$ allows us to obtain an explicit formula (see \cite{LPS}, Lemmas~5.2 and~7.2):
\begin{align*}
J^{BG}(t, \bt, z) 
& = 
z\sum_{\bv{k}\in(\ZZ_{\geq 0})^{\bar{G}}}\prod_{g \in \bar{G}}\frac{(t^{g})^{k_g}}{z^{k_g}k_g!}
\sum_{k_0\geq 0}\frac{t^{k_0}}{z^{k_0}k_0!}\ii_{\jj^{k_0}\prod_g g^{k_g}}. 
\end{align*}

Using the twisted theory technology, one may alter $J^{BG}(t, \bt, z)$ by a \emph{hypergeometric modification} (see \cite{CCIT}) to obtain an $I$-function, $I^{\cX}(t, \bt, z)$ in the sense of Definition~\ref{d:I function}.
Let $a(\bv{k})^j=\sum_{s}k_g m_j(g)$.  Define the modification factor
\[
M(k_0,\bv{k}) := \prod_{j=1}^N \prod_{l=0}^{\lfloor k_0q_j+a(\bv{k})^j \rfloor-1}\Big(-c_j\lambda-(\langle k_0 q_j+a(\bv{k})^j  \rangle +l)z\Big)
\]
where $\langle -\rangle$ denotes the fractional part. Then $I^{\cX}(t, \bt, z)$ is defined as 
\begin{equation}
I^{\cX}(t, \bt,z)= z t^{d\lambda/z}
\sum_{\bv{k}\in(\ZZ_{\geq 0})^{\bar{G}}}\prod_{g \in \bar{G}}\frac{(t^{g})^{k_g}}{z^{k_g}k_g!}\sum_{k_0\geq 0}\frac{M(k_0,\bv{k})t^{k_0}}{z^{k_0}k_0!}\ii_{\jj^{k_0}\prod_g g^{k_g}}.
\end{equation}
The above modification factor is explained in \cite{CCIT}, where it is proven that $I^{\cX}(t, \bt, z)$ is a (big) $I$-function for $\cX$.  Using Gamma functions this simplifies to
\begin{align}
\nonumber
I^{\cX}(t, \bt,z) = &zt^{d\lambda/z}\sum_{\bv{k}\in(\ZZ_{\geq 0})^{\bar{G}}}\prod_{g \in \bar{G}}\frac{(t^{g})^{k_g}z^{(\age(g)-1)k_g}}{k_g!}\sum_{k_0\geq 0}\frac{t^{k_0}z^{k_0(\sum_j q_j - 1)}}{z^{\sum_j\langle k_0q_j+a(\bv{k})^j\rangle}k_0!}\\
  & \cdot \prod_{j=1}^N \frac{\Gamma(1-c_j\tfrac{\lambda}{z}- \langle k_0q_j+a(\bv{k})^j\rangle)}{\Gamma(1-c_j\tfrac{\lambda}{z}- k_0q_j-a(\bv{k})^j)}\ii_{\jj^{k_0}\prod_g g^{k_g}} \label{e:IX}
\end{align}

\subsubsection{The $I$-function of $\cY$}
The components of the inertia stack $I \cY$ are indexed by $\{g\}_{g \in G}$.  This follows from the facts that first each $c_j$ divides $d$ (and so components of $I\PP(c_1, \ldots, c_N)$ correspond to powers of $\jj$), and second that $G$ splits as $\langle \jj \rangle \oplus \bar{G}$.
Here the component $\cY_g$ of $I\cY$ is identified with the closed subset of $\cY$ obtained by setting $x_j = 0$ for all coordinates not fixed by $g$ (i.e. $m_j(g) \neq 0$).  An equivariant basis for the Chen--Ruan cohomology of $\cY$ is given by 
\[ \bigcup_{g \in G} \{ \tilde \ii_g, \tilde \ii_g H, \ldots, \tilde \ii_g H^{(\dim((\CC^N)^g) - 1)}\},
\]
where $\tilde \ii_g$ is the fundamental class of $\cY_g$ and $\tilde \ii_g H^k$ denotes the pullback of the $k$th power of the hyperplane class from the course space of $\cY_g$.  Here we use the convention that $\ii_g$ is zero if $\cY_g$ is empty (i.e., if the action of $g$ on $\CC^N$ fixes only the origin).

\begin{notation}\label{n:Gcosets2}
We may also separate components of $I\cY$ according to $\bar G$ cosets,
\[ I\cY = \bigcup_{g \in \bar{G}} \bigcup_{0 \leq k \leq d-1} \cY_{\jj^k g}.\]  One may check that the corresponding subspace
$H^*_g(\cY) := H^*_T( \bigcup_{0 \leq k \leq d-1} \cY_{\jj^k g})$ has dimension equal to $c_1 + \cdots + c_N$.
\end{notation}

We will also let $t^g$ denote the dual coordinate of $\tilde \ii_g$ for $g \in G$, and let $q$ denote the \emph{exponential} of the dual coordinate to $H$.  

An $I$-function for toric stacks is given in \cite{CCIT2}.  
Again using Gamma functions, a (big) $I$-function for $\cY$ takes the form
\begin{align}\nonumber
I^{\cY}(q,\bt,z) =&zq^{H/z} \sum_{\bv{k}\in(\ZZ_{\geq 0})^{\bar{G}}}\prod_{g \in \bar{G}}\frac{(t^{g})^{k_g}z^{(\age(g)-1)k_g}}{k_g!} \sum_{k_0\geq 0}\frac{q^{k_0/d}}{z^{k_0(\sum_j q_j - 1)+\sum_j\langle k_0q_j-a(\bv k)^j\rangle}} \\ \label{e:IY}
	&\cdot \frac{\Gamma(1-\tfrac{d(\lambda+H)}{z})}{\Gamma(1-k_0-\tfrac{d(\lambda+H)}{z})}\prod_{j=1}^N\frac{\Gamma(1+c_jH/z-\langle -k_0q_j+a(\bv{k})^j \rangle)}{\Gamma(1+c_jH/z+k_0q_j-a(\bv{k})^j)}\tilde \ii_{\jj^{-k_0}\prod_g g^{k_g}}
\end{align}
%

\section{The correspondence for toric blowups}\label{the correspondence for toric blowups}

\subsection{Asymptotic correspondence for positive discrepancy}
In this section we will state a correspondence of genus zero Gromov-Witten theories for the case when $|f|: \cY \to |\cX|$ has positive discrepancy.   In section \ref{correspondence_r_negative} we will discuss the other case.  Recall that the discrepancy of $|f|$ was equal to $\sum_{j = 1}^N q_j - 1$.
To simplify notation slightly we will work with the quantity $r = d \cdot \text{disc}(|f|)$,
 \[r:=\sum_{j=1}^N c_j -d.\] 

The idea behind the asymptotic correspondence of genus zero theories is to obtain the genus zero Gromov-Witten invariants of $\cX$ using the information provided by the genus zero Gromov-Witten invariants of $\cY$.
To be more specific, we show that the $I$-function of $\cX$ (and therefore its $J$-function) is completely determined by the $I$-function of $\cY$ through power series asymptotic expansion.
\begin{theorem}\label{theorem_asymptotic_correspondence}
Assume $\text{disc}(|f|)>0$. There exists a unique linear transformation $L:H_{CR,T}^{\ast}(\cY)\longrightarrow H_{CR,T}^{\ast}(\cX)$ such that 
\[
L\cdot I^{\cY}(q,\bt,z=1)\sim I^{\cX}(t=q^{-1/d}, \bt,z=1)\quad\text{as $q\rightarrow\infty$}.
\]
\end{theorem}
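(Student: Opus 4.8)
The plan is to reproduce the Borel summation argument of the example above, carried out uniformly over the $\bar G$-cosets. Throughout one sets $z=1$ and treats the orbifold coordinates $\{t^g\}_{g\in\bar G}$ as spectator parameters: in both \eqref{e:IX} and \eqref{e:IY} they enter only through the prefactor $\prod_{g\in\bar G}(t^g)^{k_g}/k_g!$, so that for each fixed $\bv{k}$ the essential content is a one-variable series in $k_0$ pairing the classes $\ii_{\jj^{k_0}\prod_g g^{k_g}}$ on $\cX$ with $\tilde\ii_{\jj^{-k_0}\prod_g g^{k_g}}$ on $\cY$. I would proceed in three steps: first, regularize $I^\cX$ to a convergent series $I_{reg}$; second, apply a Laplace transform to produce a holomorphic function $\iI(q)$ whose power series asymptotic expansion as $q\to\infty$ is exactly $I^\cX(t=q^{-1/d},\bt,1)$; and third, show that $\iI(q)$ is annihilated by the Picard--Fuchs operator governing $I^\cY$, so that $\iI=L\cdot I^\cY$ for a unique linear $L$.

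For the regularization I would first estimate the $k_0$-th coefficient of $I^\cX$. Since $\Gamma(x)/\Gamma(x-n)=\prod_{l=1}^{n}(x-l)$, the ratio of Gamma factors in \eqref{e:IX} contributes $\sum_{j=1}^N\lfloor k_0q_j+a(\bv{k})^j\rfloor\approx k_0\sum_j q_j$ linear factors, so this coefficient grows like $(k_0!)^{\sum_j q_j-1}=(k_0!)^{r/d}$; because $r>0$ the series diverges and is Gevrey of order $r/d$. The cure is to divide the $k_0$-summand by a Gamma factor of matching order in $k_0$ --- the analogue of the factor $\Gamma(1+m+k/2+\lambda)$ of the example, where indeed $r/d=1/2$ --- producing a series $I_{reg}(\tau)$ whose coefficients grow at most geometrically and which therefore converges on a disk. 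As in the example, the hypergeometric (GKZ) structure of \eqref{e:IX} forces $I_{reg}$ to satisfy a linear ODE in $\tau$ all of whose singular points (at $0$, a finite point determined by the toric relations, and $\infty$) are regular; this is what licenses the analytic continuation of $I_{reg}$ into a sector containing the positive real axis, and hence the convergence of the Laplace integral. Watson's lemma \cite{Miller} then guarantees that the resulting $\iI(q)$ has the prescribed asymptotic expansion.

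For the third step I would expand the ODE satisfied by $I_{reg}$ and apply the Laplace transform term by term, using that it intertwines multiplication by $\tau$ with $-d/dq$ and $d/d\tau$ with multiplication by $q$; as in the example this converts the hypergeometric operator in $\tau\,d/d\tau$ into precisely the Picard--Fuchs operator of $\cY$ in $q\,d/dq$. Because the components of $I^\cY$ form a complete set of solutions of this operator --- the nilpotent class $H$ and the inertia sectors producing a full, linearly independent basis of solutions --- it follows that $\iI=L\cdot I^\cY$ for a linear map $L\colon H^*_{CR,T}(\cY)\to H^*_{CR,T}(\cX)$. Uniqueness of $L$ is then immediate from the linear independence of these components together with the uniqueness of power series asymptotic expansions; one also checks that the construction, being uniform in $\bv{k}$ and compatible with the $\bar G$-grading, yields one and the same $L$ on every coset block.

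The main obstacle is the analytic step. The whole point of the positive-discrepancy regime is that $I^\cX$ genuinely diverges, so the crepant-case tool of analytic continuation is unavailable and must be replaced by Borel summation; the delicate part is establishing convergence of $I_{reg}$ and, above all, its analytic continuation to a sector reaching $\infty$, for which the regular-singularity analysis of the governing ODE is the essential input. I expect the case $r>1$ to require genuine care: the order-one Laplace transform of the example (where $r=1$) no longer tames a coefficient growth of order $(k_0!)^{r/d}$, and one must instead invoke the Borel--Laplace summation of the correct Gevrey order $r/d$, that is, choose the regularizing Gamma factor and the Laplace kernel so that Watson's lemma returns exactly the powers $q^{-k_0/d}$. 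A secondary, more bookkeeping-level difficulty is verifying that the operator produced after the Laplace transform is literally the $\cY$ Picard--Fuchs operator --- matching the two GKZ systems precisely --- and confirming that $L$ is a single $\bv{k}$-independent linear transformation.
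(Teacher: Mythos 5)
Your outline reproduces the paper's own proof almost step for step: the paper regularizes $I^{\cX}_{\bv{k}}$, applies an order-one Laplace transform, invokes Watson's lemma, and then matches Picard--Fuchs operators exactly as you describe; moreover your proposed fix for general $r$ (tuning the regularizing Gamma factor and the Laplace kernel to Gevrey order $r/d$) is precisely what the paper implements by using the fractional powers $\tau^{r(k_0/d+\lambda)}$ with regularizing factor $\Gamma(1+r(k_0/d+\lambda))$, keeping the kernel $e^{-u\tau}$ of order one, and substituting $u=q^{1/r}$ at the end. The analytic input you identify (regular singularities of the regularized equation permitting continuation of $I^{reg}_{\bv{k}}$ to $\infty$) and the use of completeness of the components of $I^{\cY}_{\bv{k}}$ as solutions of the $\cY$ Picard--Fuchs equation are likewise the paper's Lemma~\ref{lemma_picard_fuchs} and Corollaries~\ref{PF-corollary} and~\ref{corollary_linear_k}.

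The one genuine hole is your treatment of $\bv{k}$-independence of $L$, which you dismiss as bookkeeping and justify by saying the construction is ``uniform in $\bv{k}$ and compatible with the $\bar G$-grading.'' That is not an argument: for each fixed $\bv{k}$, completeness of solutions produces \emph{some} linear map $L_{\bv{k}}$ with $L_{\bv{k}}\cdot I^{\cY}_{\bv{k}}=\iI_{\bv{k}}$, but nothing in this abstract existence statement forces $L_{\bv{k}}=L_{\bv{k}'}$ whenever $\prod_g g^{k_g}=\prod_g g^{k'_g}$, and without that the theorem --- a single $L$ applied to the full $\bt$-dependent $I$-function --- does not follow. The paper closes this with a concrete mechanism: equations~\eqref{picard2}, \eqref{e:hopeitsnotwrong} and~\eqref{e:remarkeq} show that $\iI_{\bv{k}}$ and $I^{\cY}_{\bv{k}}$ are each obtained from the primitive representatives $\iI_{\bv{k(g)}}$ and $I^{\cY}_{\bv{k(g)}}$ by applying one and the same differential operator $\prod_{j=1}^N\prod_{l=0}^{M_j-1}\left(c_j q\frac{d}{dq}-l-m_j(\bv{g})\right)$; since this operator acts only on the variable $q$ it commutes with the cohomology-valued map $L_{\bv{g}}$ defined via the primitive representative, and applying it to the identity $L_{\bv{g}}\cdot I^{\cY}_{\bv{k(g)}}=\iI_{\bv{k(g)}}$ yields the identity for every $\bv{k}$ in the coset. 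These intertwining relations follow from the same Laplace-transform identities you already use, but they must be stated and proved; your argument as written is incomplete at exactly this point.
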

Theorem \ref{theorem_asymptotic_correspondence} has the following consequence:
\begin{corollary}
If  $\text{disc}(|f|)>0$, then the genus zero Gromov-Witten invariants of $\cX$ are completely determined by the genus zero Gromov-Witten invariants of $\cY$.
\end{corollary}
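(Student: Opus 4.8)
The plan is to generalize the Borel summation computation carried out for $[\CC^3/\ZZ_2]$ to the $I$-functions \eqref{e:IX} and \eqref{e:IY}, treating the $\bt$-variables as spectators. The prefactor $\prod_{g \in \bar G}(t^g)^{k_g}z^{(\age(g)-1)k_g}/k_g!$ is literally the same in $I^\cX$ and $I^\cY$, and the multi-index $\bv{k}$ fixes the $\bar G$-part of the twisted sector; so the entire content of the theorem resides, block by block over the $\bar G$-cosets, in the divergent $k_0$-series in the variable $t = q^{-1/d}$. I would fix $\bv{k}$, run the summation on each such block, and assemble a single $\bt$-independent, $\bar G$-coset-preserving linear map $L$ at the end.

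Setting $z = 1$, the exponent of $t$ in the $(k_0,\bv{k})$ term is $k_0 + d\lambda$; write it as $d\,e$ with $e = k_0/d + \lambda$. A Stirling estimate on the Gamma-quotients in \eqref{e:IX} shows that the $t$-coefficients grow like $(k_0!)^{r/d}$, so after the substitution $\tau = t^d$ the series becomes Gevrey of order $r$ in $\tau$. I would regularize by dividing the $(k_0,\bv{k})$ term by a Gamma factor of argument $1 + re$ — for $r = 1$ this is exactly the $\Gamma(1+m+k/2+\lambda)$ appearing in the example — so that the resulting series $I_{reg}(\tau,\bt)$ has positive radius of convergence. I would then exhibit a hypergeometric (GKZ-type) ODE in $\tau$ annihilating $I_{reg}$, generalizing \eqref{example_regularized_equation}, and verify that all of its singular points other than $0$ and $\infty$ are regular singular, so that $I_{reg}$ continues analytically along rays with $|\arg\tau| < \pi$ out to $\tau = \infty$.

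With $I_{reg}$ in hand I would define $\iI(q,\bt)$ by the Borel--Laplace summation inverse to the above regularization — the ordinary Laplace transform $q\,\mathcal L(I_{reg})(q)$ when $r = 1$, and its order-$1/r$ analogue in general — integrating along a ray that avoids the finitely many nonzero singular points, so that $\iI$ is holomorphic for $q$ in a sector at infinity. By Watson's lemma its asymptotic expansion as $q \to \infty$ is the termwise transform of the Taylor expansion of $I_{reg}$ at $\tau = 0$, and the regularizing factor was chosen precisely so that this expansion is $I^\cX(t = q^{-1/d}, \bt, z = 1)$. Feeding the ODE for $I_{reg}$ through the transform — which trades $\tau\frac{d}{d\tau}$ and multiplication by $\tau$ for the conjugate operations in $q$ — I would check that the resulting operator is the Picard--Fuchs operator annihilating the components of $I^\cY(q, \bt, z=1)$. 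Since $I^\cY$ is a big $I$-function, the coefficients of its basis classes form a complete set of solutions of this system, so each of the finitely many components of $\iI$ is a unique linear combination of them; this produces a unique $L$ with $L\cdot I^\cY(q,\bt,1) = \iI(q,\bt)$, whence $L \cdot I^\cY \sim I^\cX(t=q^{-1/d}, \bt, 1)$. Uniqueness of $L$ follows from the linear independence of the components of $I^\cY$ together with the uniqueness of power-series asymptotic expansions.

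The main obstacle is the combination of the second and third steps: exhibiting the hypergeometric ODE for $I_{reg}$ in closed form for arbitrary $G$ and $\bv{k}$, proving that its intermediate singular points are all regular singular (this regularity is exactly what licenses the continuation to $\tau = \infty$, and hence the convergence of the summation integral and the validity of Watson's lemma), and then matching the transformed operator term for term with the $\cY$ Picard--Fuchs operator. The careful bookkeeping of the fractional parts $\langle k_0 q_j + a(\bv{k})^j\rangle$ and of the shifts in the Gamma arguments, together with the correct setup of the order-$1/r$ summation when $r > 1$, is where the genuine work lies; by contrast the passage from the matched differential equations to the linear map $L$ is formal.
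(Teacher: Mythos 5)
Your Borel--Laplace construction reproduces, essentially step for step, the paper's own proof of Theorem~\ref{theorem_asymptotic_correspondence}: regularization by dividing by $\Gamma(1+r(k_0/d+\lambda))$, analytic continuation using regularity of the singular points of the regularized Picard--Fuchs equation, Laplace transform in the variable $u=q^{1/r}$, Watson's lemma, and matching with the Picard--Fuchs system of $\cY$. But the statement you were asked to prove is the \emph{corollary}, and your argument stops at the asymptotic relation $L\cdot I^{\cY}(q,\bt,1)\sim I^{\cX}(t=q^{-1/d},\bt,1)$. That relation, together with uniqueness of power-series asymptotic expansions, pins down only the $z=1$ specialization of $I^{\cX}$. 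To conclude that the genus zero invariants of $\cX$ are determined, two further steps are needed, and they are precisely the content of the paper's proof of the corollary: (i) restore the $z$-dependence via the homogeneity relation $I^{\cX}(t,\bt,z)=z^{1-\gr}z^{-r\lambda}I^{\cX}(tz^{r/d},\bt,1)$ of Section~\ref{z=1} (a priori the $z=1$ slice could lose information); and (ii) apply Lemma~\ref{l:BF} (Birkhoff factorization of a big $I$-function) to recover the $J$-function, which is the object that actually determines the genus zero theory. Neither step appears in your proposal, so the chain from ``asymptotics of $I$-functions'' to ``invariants of $\cX$ are determined'' is incomplete.

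Within the part you do prove there is also a suppressed step. For a fixed $\bv{g}\in\bar{G}$ there are infinitely many $\bv{k}$ with $\prod_{g}g^{k_g}=\bv{g}$, all supported on the same subspace $H^*_{\bv{g}}(\cY)$, and the blocks $I^{\cY}_{\bv{k}}$, $\iI_{\bv{k}}$ satisfy \emph{different} Picard--Fuchs equations (the parameters $a(\bv{k})^j$ shift by the integers $M_j$). Completeness of solutions gives, for each $\bv{k}$ separately, \emph{some} linear map matching the two; your plan to ``assemble a single $\bt$-independent $L$ at the end'' requires all these maps to coincide. The paper secures this with the intertwining operators $\prod_{j=1}^N\prod_{l=0}^{M_j-1}\left(c_jq\frac{d}{dq}-l-m_j(\bv{g})\right)$ of \eqref{picard2}, \eqref{e:hopeitsnotwrong} and \eqref{e:remarkeq}, which carry the minimal block $\bv{k(g)}$ of both $\iI$ and $I^{\cY}$ to the $\bv{k}$-block, so that the single map $L_{\bv{g}}$ of Corollary~\ref{corollary_linear_k} works for every $\bv{k}$ in the class. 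Finally, your appeal to bigness of $I^{\cY}$ as the reason its components form a complete set of solutions is not the right mechanism: completeness is a dimension count --- when $\text{disc}(|f|)>0$ the operator \eqref{PF_equation_k} has degree $\sum_j c_j=\dim H^*_{\bv{g}}(\cY)$ --- whereas bigness (the existence of the operators $P_i$) is what feeds into Lemma~\ref{l:BF} in step (ii) above.
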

\begin{proof}
First note that $I^{\cY}$ may be viewed as a generating function of genus zero Gromov--Witten invariants.
$I^{\cX}(t=q^{-1/d}, \bt,z=1)$ is the power series asymptotic expansion of $L\cdot I^{\cY}(q,\bt,z=1)$ and it is therefore uniquely determined by $I^{\cY}$. We may recover $I^{\cX}(t=q^{-1/d}, \bt,z)$ from $I^{\cX}(t=q^{-1/d}, \bt,1)$ by means of the relations described in Section~\ref{z=1} below. Lastly, the Givental $J$-function of $\cX$ (and therefore, its genus zero theory) can be obtained from $I^{\cX}$ via Birkhoff factorization by Lemma~\ref{l:BF}.
\end{proof}
The proof that follows relies heavily on the recursive structure of the functions $I^\cY$ and $I^\cX$.  In particular we take advantage of an identification of the differential equations satisfied the respective $I$-functions to relate the functions themselves.
This is part of the larger theory of GKZ systems described by Gelfand--Kapranov--Zelevinsky in \cite{GKZ} and Adolphson in \cite{Ad}, although in what follows the relevant differential equations can easily be checked by hand.

\begin{notation}
In order to simplify the computations in the subsequent sections, we introduction the following notation. Define
\begin{multline*}
I^{\cY}_{\bv{k}}(q,z):=z\sum_{k_0\geq 0}\frac{q^{k_0/d+H/z}}{z^{k_0r/d+\sum_j\langle k_0q_j-a(\bv k)^j\rangle}} \frac{\Gamma(1-\tfrac{d(\lambda+H)}{z})}{\Gamma(1-k_0-\tfrac{d(\lambda+H)}{z})}\\
\cdot\prod_{j=1}^N\frac{\Gamma(1+c_jH/z-\langle -k_0q_j+a(\bv{k})^j \rangle)}{\Gamma(1+c_jH/z+k_0q_j-a(\bv{k})^j)}\tilde \ii_{\jj^{-k_0}\prod_g g^{k_g}}\quad\text{and}
\end{multline*}
\[
I^{\cX}_{\bv{k}}(t,z):= z\sum_{k_0\geq 0}\frac{t^{k_0+d\lambda/z}z^{k_0 r/d}}{z^{\sum_j\langle k_0q_j+a(\bv{k})^j\rangle}k_0!} \prod_{j=1}^N \frac{\Gamma(1-c_j\tfrac{\lambda}{z}- \langle k_0q_j+a(\bv{k})^j\rangle)}{\Gamma(1-c_j\tfrac{\lambda}{z}- k_0q_j-a(\bv{k})^j)}\ii_{\jj^{k_0}\prod_g g^{k_g}}.
\]
Then the $I$-functions of $\cY$ and $\cX$ can be respectively written as
\[
\begin{split}
I^{\cY}(q,\bt,z)&= \sum_{\bv{k}\in(\ZZ_{\geq 0})^{\bar{G}}}\prod_{g \in \bar{G}}\frac{(t^{g})^{k_g}z^{(\age(g)-1)k_g}}{k_g!} I^{\cY}_{\bv{k}}(q,z)\\
I^{\cX}(t,\bt,z)&= \sum_{\bv{k}\in(\ZZ_{\geq 0})^{\bar{G}}}\prod_{g \in \bar{G}}\frac{(t^{g})^{k_g}z^{(\age(g)-1)k_g}}{k_g!} I^{\cX}_{\bv{k}}(t,z).
\end{split}
\]
\end{notation}

\subsection{Setting $z$ equal to $1$}\label{z=1}
In what follows we will set $z=1$ in the $I$-functions for computational convenience. It is worth noting that we can recover the original $I$-functions by the following procedure. Define a grading operator $\gr$ by
\begin{equation}
\begin{split}
&\gr (H):=H,\quad \gr (\lambda):=\lambda,\quad\gr(t^{g}):=(1-\age(g))t^{g},\\
& \gr\left(\tilde \ii_{\jj^{-k_0}\prod_g g^{k_g}}\right):= \left(\sum_j\langle k_0q_j-a(\bv k)^j\rangle\right)\tilde \ii_{\jj^{-k_0}\prod_g g^{k_g}},\quad\text{and}\\
& \gr\left(\ii_{\jj^{k_0}\prod_g g^{k_g}}\right):=\left(\sum_j\langle k_0q_j+a(\bv{k})^j\rangle \right) \ii_{\jj^{k_0}\prod_g g^{k_g}}.
 \end{split}
\end{equation}
We then have the following relations that allow us to restore the powers of $z$ to the $I$-functions:
\[
\begin{split}
I^{\cY}(q,\bt,z)&= z^{1-\gr}z^{rH}I^{\cY}(q/z^r,\bt,1)\quad\text{and}\\
I^{\cX}(t, \bt,z)&=z^{1-\gr}z^{-r\lambda}I^{\cX}(t z^{r/d}, \bt,1).
\end{split}
\]
\subsection{The regularized I-function}
We define the \textit{regularized $I$-function} of $[\CC^N/G]$ to be
\[
I^{reg}_{\bv{k}}(\tau):=\sum_{k_0\geq 0} \frac{\tau^{r(k_0/d+\lambda)}}{\Gamma(1+r\left(k_0/d+\lambda \right))k_0!}\prod_{j=1}^N \frac{\Gamma(1-c_j\lambda- \langle k_0q_j+a(\bv{k})^j\rangle)}{\Gamma(1-c_j \lambda- k_0q_j-a(\bv{k})^j))}  \ii_{\jj^{k_0}\prod_g g^{k_g}}.
\]
Using the ratio test it is easy to see that this series defines a holomorphic function in a disk of radius $\left(r^r d^d \prod_{j=1}^N c_j^{-c_j}\right)^{1/r}$ centered at $\tau=0$. A simple computation reveals that $I^{reg}_{\bv{k}}(\tau)$ satisfies the following \textit{regularized Picard-Fuchs equation}:
\begin{multline}\label{regularized_equation}
\left[\tau^r\prod_{j=1}^N\prod_{l=0}^{c_j-1}\left(-\frac{c_j}{r}\tau\frac{d}{d\tau}-l-a(\bv{k})^j\right)\right.\\
\left.-\prod_{l=0}^{d-1}\left(\frac{d}{r}\tau\frac{d}{d\tau}-d\lambda-l\right)\prod_{l=0}^{r-1}\left(\tau\frac{d}{d\tau}-l\right)\right]I_{\bv{k}}^{reg}(\tau)=0.
\end{multline}
This equation has singular points at $\tau=0,\infty$ and for $\tau$ satisfying $(-\tfrac{\tau}{r})^r=(-d)^d\prod_{j=1}^Nc_j^{-c_j}$. All of these points are regular singularities. This means that $I^{reg}_{\bv{k}}(\tau)$ can be analytically continued to $\tau=\infty$ along any ray that avoids these singularities.
\subsection{Proof of Theorem \ref{theorem_asymptotic_correspondence}} The idea behind the proof of theorem \ref{theorem_asymptotic_correspondence} consists of constructing a holomorphic function that satisfies the Picard-Fuchs equation of $\cY$, and whose asymptotic expansion is given by the $I$-function of $\cX$.
To construct this function we use the following Laplace integral of the regularized $I$-function:
\[
\iI_{\bv{k}}(u):=u\int_{0}^{\infty}e^{-u\tau}I_{\bv{k}}^{reg}(\tau)d\tau,
\]
where the ray of integration avoids the singular points of $I_{\bv{k}}^{reg}(\tau)$. This integral is well-defined, as shown in \cite[Chapter 2]{MKH}.
As a consequence of Watson's lemma for a complex variable \cite{Miller}, the asymptotic expansion of $\iI_{\bv{k}}(u)$  is given by
\[
\iI_{\bv{k}}(u)\sim \sum_{k_0\geq 0} \frac{1}{u^{r(k_0/d+\lambda)}k_0!}\prod_{j=1}^N \frac{\Gamma(1-c_j\lambda- \langle k_0q_j+a(\bv{k})^j\rangle)}{\Gamma(1-c_j \lambda- k_0q_j-a(\bv{k})^j))}  \ii_{\jj^{k_0}\prod_g g^{k_g}}
\]
as $u\rightarrow\infty$ from the region $|\arg(u)|<\text{min}(\pi/r,\pi/2)$ if $\sum_j c_j$ is odd and from the region $0<\arg(u)<\text{min}(2\pi/r,\pi/2)$ if $\sum_{j} c_j$ is even.
It follows from this that 
\begin{equation}
\iI_{\bv{k}}(u=q^{1/r})\sim I^{\cX}_{\bv{k}}(t=q^{-1/d},z=1)
\end{equation}
as $q\rightarrow\infty$.

Define $\iI (u)$ to be
\begin{equation}\label{holomorphic_I_function}
\iI (u):=\sum_{\bv{k}\in(\ZZ_{\geq 0})^{\bar{G}}}\prod_{g \in \bar{G}}\frac{(t^{g})^{k_g}}{k_g!} \iI_{\bv{k}}(u).
\end{equation}

The following lemma and corollaries show that $\iI$ satisfies the Picard-Fuchs equation of $\cY$.
\begin{lemma}\label{lemma_picard_fuchs}
$\iI_{\bv{k}}(u)$ satisfies the following differential equation:
\begin{equation}\label{picard1}
\left[\prod_{j=1}^N\prod_{l=0}^{c_j-1}\left(\frac{c_j}{r}u\frac{d}{du}-l-a(\bv{k})^j\right)-u^r\prod_{l=0}^{d-1}\left(-\frac{d}{r}u\frac{d}{du}-d\lambda-l\right) \right]\iI_{\bv{k}}(u)=0.
\end{equation}
Furthermore, given $\bv{k} \in (\ZZ_{\geq 0})^{\bar{G}}$, let
$\bv{g} = \prod_{g \in \bar{G}} g^{k_g}$.  Then for all $1 \leq j \leq N$, 
 $a(\bv{k})^j- m_j(\bv{g})  = M_j$ for some integer $M_j \geq 0$ and 
\begin{equation}\label{picard2}
\prod_{j=1}^N\prod_{l=0}^{M_j-1}\left(\frac{c_j}{r}u\frac{d}{du} -l-m_j(\bv{g})\right)\iI_{\bv{k(g)}}(u)=\iI_{\bv{k}}(u),
\end{equation}
where 
\begin{equation}\label{e:kgb}
\bv{k(g)}:  g \mapsto \left\{ \begin{array}{ll}
1 & \text{if $g = \bv{g}$}\\
0 & \text{otherwise.}
\end{array}
\right.
\end{equation}
\end{lemma}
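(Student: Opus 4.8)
The plan is to establish both differential equations by transporting the regularized Picard--Fuchs equation \eqref{regularized_equation} through the Laplace transform, using the standard dictionary between multiplication/differentiation in the $\tau$-variable and differentiation/multiplication in the $u$-variable. Concretely, for the Laplace transform $\iI_{\bv{k}}(u) = u\mathcal{L}(I^{reg}_{\bv{k}})(u)$, integration by parts gives the operator correspondences $\mathcal{L}(\tau \tfrac{d}{d\tau} f) \leftrightarrow -\tfrac{d}{du}(u\,\mathcal{L}(f))$ and $\mathcal{L}(\tau f) \leftrightarrow -\tfrac{d}{du}\mathcal{L}(f)$, valid because the boundary terms at $0$ and $\infty$ vanish under the growth and decay assumptions (this is where the hypothesis $\Re(\lambda) \gg 0$, as in the worked example, gets used). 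To prove \eqref{picard1}, I would first set $u = q^{1/r}$ so that $u\tfrac{d}{du} = r\, q\tfrac{d}{dq}$, change variables in \eqref{regularized_equation} to match, and then apply $\mathcal{L}$ term by term to the two products. The factor $\tau^r$ in \eqref{regularized_equation} becomes the shift producing the $u^r$ prefactor in \eqref{picard1}, while the Euler operators $\tau\tfrac{d}{d\tau}$ convert into the Euler operators $u\tfrac{d}{du}$ up to the sign flips visible when comparing the two displays (the $-\tfrac{c_j}{r}$ in \eqref{regularized_equation} versus $+\tfrac{c_j}{r}$ in \eqref{picard1}, and symmetrically for the $d\lambda$ factor). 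The cleanest route is to verify the operator identity directly on the asymptotic series for $\iI_{\bv{k}}(u)$, since a power series solution of the regularized equation maps under Watson's lemma to an asymptotic series whose coefficients satisfy the transformed recursion; checking that \eqref{picard1} annihilates this series reduces to a finite index-shift computation on $k_0$.

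For the second identity \eqref{picard2}, the plan is to first check the integrality claim: that $a(\bv{k})^j - m_j(\bv{g})$ is a nonnegative integer $M_j$. This follows from the definitions, since $a(\bv{k})^j = \sum_g k_g m_j(g)$ is a sum of multiplicities while $m_j(\bv{g})$ is the multiplicity of the single product element $\bv{g} = \prod_g g^{k_g}$; because multiplicities are the fractional parts of the corresponding sums of rotation angles, the difference $a(\bv{k})^j - m_j(\bv{g})$ is exactly the integer part that gets truncated when passing from the genuine sum of angles to its representative in $[0,1)$, and nonnegativity is immediate. With $M_j$ identified, the content of \eqref{picard2} is that applying the displayed product of shift operators to $\iI_{\bv{k(g)}}(u)$ reproduces $\iI_{\bv{k}}(u)$. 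I would prove this again at the level of the asymptotic series: the operator $\tfrac{c_j}{r}u\tfrac{d}{du}$ acts on the monomial $u^{-r(k_0/d+\lambda)}$ by the eigenvalue $-c_j(k_0/d+\lambda)$, so the product $\prod_{l=0}^{M_j-1}\bigl(\tfrac{c_j}{r}u\tfrac{d}{du} - l - m_j(\bv{g})\bigr)$ acts as a product of linear factors in $k_0$. The goal is then to show that these factors exactly account for the difference between the Gamma-function ratios appearing in $\iI_{\bv{k(g)}}$ and those in $\iI_{\bv{k}}$, namely the shift from the fractional part $\langle k_0 q_j + a(\bv{k})^j\rangle$ to $\langle k_0 q_j + m_j(\bv{g})\rangle$ together with the corresponding change in the denominator $\Gamma$-argument.

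The heart of \eqref{picard2} is therefore a Gamma-function bookkeeping identity: the ratio of the relevant $\Gamma$-factors for index $\bv{k}$ versus index $\bv{k(g)}$ should equal precisely the evaluated product of linear shift operators, once one uses $\Gamma(x+1) = x\Gamma(x)$ to telescope. I expect this telescoping to be the main obstacle, not because any single step is deep, but because one must track how the fractional-part functions $\langle k_0 q_j + a(\bv{k})^j\rangle$ and $\langle k_0 q_j + m_j(\bv{g})\rangle$ differ by the integer $M_j$ uniformly in $k_0$, and verify that the $M_j$ linear factors $\bigl(\tfrac{c_j}{r}u\tfrac{d}{du} - l - m_j(\bv{g})\bigr)$ for $0 \le l \le M_j - 1$ match the $M_j$ Pochhammer-type factors produced by the Gamma ratio. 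The subtlety is that $\langle k_0 q_j + a(\bv{k})^j\rangle = \langle k_0 q_j + m_j(\bv{g})\rangle$ as real numbers (since they differ by the integer $M_j$), so the $z^{\sum_j\langle\cdots\rangle}$ and the innermost fractional-part arguments of the $\Gamma$'s agree between the two indices, and the entire discrepancy is carried by the $k_0$-independent shift $M_j$; confirming that this produces exactly the claimed operator, with the right sign and with each factor landing on the correct eigenvalue $-c_j(k_0/d+\lambda) - l - m_j(\bv{g})$, is the delicate part. Once this Gamma identity is in hand, both \eqref{picard1} and \eqref{picard2} follow by assembling the term-by-term computations, and the stated nonnegativity of $M_j$ guarantees the product in \eqref{picard2} is a genuine (finite, forward) shift operator.
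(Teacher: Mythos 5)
Your proposal starts from the right idea---pushing the regularized equation \eqref{regularized_equation} through the Laplace transform---but the route you commit to as ``cleanest,'' namely verifying \eqref{picard1} and \eqref{picard2} \emph{on the asymptotic series} of $\iI_{\bv{k}}(u)$, cannot prove the lemma. The lemma asserts exact identities between holomorphic functions, and an asymptotic expansion does not determine a function: since $e^{-u}\sim 0$ in the relevant sector, the operator in \eqref{picard1} applied to $\iI_{\bv{k}}$ could be asymptotic to zero without vanishing, and $\prod_{j}\prod_{l}\bigl(\tfrac{c_j}{r}u\tfrac{d}{du}-l-m_j(\bv{g})\bigr)\iI_{\bv{k(g)}}$ could share an expansion with $\iI_{\bv{k}}$ without equalling it. Exactness is not a luxury here: Corollary \ref{corollary_linear_k} needs $\iI_{\bv{k}}$ to be an honest solution of \eqref{PF_equation_k} in order to express it as a linear combination of the components of $I^{\cY}_{\bv{k}}$, and needs \eqref{picard2} as an exact equation to which the operator of \eqref{e:hopeitsnotwrong} is applied; ``asymptotically a solution'' produces no linear transformation $L_{\bv{g}}$ at all. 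The paper therefore does all series-level verification on the $\tau$-side, where $I^{reg}_{\bv{k}}$ is a \emph{convergent} power series and term-by-term computation is legitimate, and then transports exact identities to the $u$-side by integration by parts in the Laplace integral: $u\mathcal{L}\bigl(\alpha\tau\tfrac{d}{d\tau}f+\beta f\bigr)=\bigl(-\alpha u\tfrac{d}{du}+\beta\bigr)\,u\mathcal{L}(f)$ when $f(0)=0$ (this is where $\Re(\lambda)\gg 0$ enters, as you noted). Your Gamma/Pochhammer bookkeeping---including the correct observation that $m_j(\bv{g})=\langle a(\bv{k})^j\rangle$, so $M_j=\lfloor a(\bv{k})^j\rfloor$ is a nonnegative integer, and the matching of the $M_j$ shift factors with eigenvalues---is fine, but it must be carried out on $I^{reg}_{\bv{k}}(\tau)$, where it works verbatim (the eigenvalue of $-\tfrac{c_j}{r}\tau\tfrac{d}{d\tau}$ on $\tau^{r(k_0/d+\lambda)}$ is the same $-c_j(k_0/d+\lambda)$) and yields the exact relation $\prod_{j=1}^N\prod_{l=0}^{M_j-1}\bigl(-\tfrac{c_j}{r}\tau\tfrac{d}{d\tau}-l-m_j(\bv{g})\bigr)I^{reg}_{\bv{k(g)}}=I^{reg}_{\bv{k}}$, which then Laplace-transforms to \eqref{picard2}.

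There is a second concrete error: your account of the $u^r$ prefactor. Under the Laplace dictionary, multiplication by $\tau$ corresponds to $-\tfrac{d}{du}$, so transforming \eqref{regularized_equation} ``term by term'' as written would turn the prefactor $\tau^r$ on the first product into $\bigl(-\tfrac{d}{du}\bigr)^r$, which is not what appears in \eqref{picard1}. The missing step is a rewriting of \eqref{regularized_equation} \emph{before} transforming: use $\prod_{l=0}^{r-1}\bigl(\tau\tfrac{d}{d\tau}-l\bigr)=\tau^r\bigl(\tfrac{d}{d\tau}\bigr)^r$, note that all remaining factors are polynomials in the Euler operator $\tau\tfrac{d}{d\tau}$ and hence commute with this one, pull the resulting $\tau^r$ to the front of the whole equation, and cancel it. This leaves
\begin{equation*}
\left[\prod_{j=1}^N\prod_{l=0}^{c_j-1}\left(-\frac{c_j}{r}\tau\frac{d}{d\tau}-l-a(\bv{k})^j\right)-\left(\frac{d}{d\tau}\right)^{r}\prod_{l=0}^{d-1}\left(\frac{d}{r}\tau\frac{d}{d\tau}-d\lambda-l\right)\right]I^{reg}_{\bv{k}}(\tau)=0,
\end{equation*}
and it is the $\bigl(\tfrac{d}{d\tau}\bigr)^r$---not the $\tau^r$---that becomes the factor $u^r$ on the second product after the Laplace transform (again with vanishing boundary terms). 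With this rewriting in place, iterating the operator dictionary gives \eqref{picard1} exactly, and the same mechanism applied to the $\tau$-side relation above gives \eqref{picard2}; without it, your term-by-term transform does not produce the claimed equation.
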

\begin{proof}
Let $f(\tau)$ be holomorphic in some region of the complex plane containing a ray on which we can define the Laplace transform of $f$. Moreover, let $f(0)=0$ and define $F(u):=u\int_{0}^{\infty} e^{-u\tau}f(\tau)d\tau$. Using the properties of the Laplace transform it is easy to see that 
\begin{align} \nonumber
u\mathcal{L}\left(\alpha \tau\frac{d}{d\tau}f(\tau)+\beta f(\tau)\right)&=-\alpha u \frac{d}{du} \left(\mathcal{L}\left(\frac{d}{d\tau}f(\tau)\right)\right)+\beta u\mathcal{L}\left(f(\tau)\right)\\ \nonumber
&=-\alpha u \frac{d}{du} \left(u\mathcal{L}(f(\tau))-f(0)\right)+\beta u\mathcal{L}\left(f(\tau)\right)\\
&=\left(-\alpha u \frac{d}{du} +\beta\right)F(u), \label{Laplace_property}
\end{align}
where $\alpha$ and $\beta$ are arbitrary complex numbers.

Now, note that Equation \ref{regularized_equation} can be rewritten as
\[
\left[\prod_{j=1}^N\prod_{l=0}^{c_j-1}\left(-\frac{c_j}{r}\tau\frac{d}{d\tau}-l-a(\bv{k})^j\right)-\left(\frac{d}{d\tau}\right)^r\prod_{l=0}^{d-1}\left(\frac{d}{r}\tau\frac{d}{d\tau}-d\lambda-l\right)\right]I_{\bv{k}}^{reg}(\tau)=0.
\]
Applying Equation \ref{Laplace_property} iteratively to this yields Equation \ref{picard1}.

To establish Equation \ref{picard2}, note that $I^{reg}_{\bv{k}}(\tau)$ satisfies 
\[
\prod_{j=1}^N\prod_{l=0}^{M_j-1}\left(-\frac{c_j}{r}\tau\frac{d}{d\tau} -l-m_j(\bv{g})\right) I_{\bv{k(g) }}^{reg}(\tau)= I_{\bv{k}}^{reg}(\tau)
\]
Applying Equation \ref{Laplace_property} iteratively to this yields the desired result.
\end{proof}
\begin{corollary}\label{PF-corollary}
$\iI_{\bv{k}}(u=q^{1/r})$ satisfies 
\begin{equation}\label{PF_equation_k}
\left[\prod_{j=1}^N\prod_{l=0}^{c_j-1}\left(c_j q\frac{d}{dq}-l-a(\bv{k})^j\right)-q\prod_{l=0}^{d-1}\left(-dq\frac{d}{dq}-d\lambda-l\right) \right]\iI_{\bv{k}}(u=q^{1/r})=0
\end{equation}
for all $g \in \bar{G}$.  Furthermore, given $\bv{k} \in (\ZZ_{\geq 0})^{\bar{G}}$, then with the same notation as in the previous lemma, we have
\begin{equation}\label{e:hopeitsnotwrong}
\prod_{j=1}^N\prod_{l=0}^{M_j-1}\left(c_j q\frac{d}{dq} -l-m_j(\bv{g})\right)\iI_{\bv{k(g)}}(u=q^{1/r})=\iI_{\bv{k}}(u=q^{1/r}).
\end{equation}

\end{corollary}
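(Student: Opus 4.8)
The plan is to obtain this corollary directly from Lemma~\ref{lemma_picard_fuchs} by the change of variables $u = q^{1/r}$, so the only real content is tracking how the Euler operator $u\frac{d}{du}$ transforms. Writing $q = u^r$ and applying the chain rule, I would compute
\[
u\frac{d}{du} = u\frac{dq}{du}\frac{d}{dq} = u \cdot r u^{r-1}\frac{d}{dq} = r u^{r}\frac{d}{dq} = r\, q\frac{d}{dq},
\]
so that $\frac{1}{r}u\frac{d}{du}$ becomes precisely $q\frac{d}{dq}$. Since the logarithmic derivative $q\frac{d}{dq}$ is single-valued, this identification is insensitive to the choice of branch of $q^{1/r}$, and no ambiguity arises from the multivaluedness of the substitution.

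With this identity in hand, I would substitute into the two differential relations of the lemma. In Equation~\ref{picard1} the factors $\frac{c_j}{r}u\frac{d}{du}$ and $-\frac{d}{r}u\frac{d}{du}$ become $c_j q\frac{d}{dq}$ and $-d\,q\frac{d}{dq}$ respectively, while the prefactor $u^r$ becomes $q$, turning the operator annihilating $\iI_{\bv{k}}(u)$ into exactly the operator of Equation~\ref{PF_equation_k} and so establishing the first claim. Applying the same substitution to Equation~\ref{picard2}, each factor $\frac{c_j}{r}u\frac{d}{du}-l-m_j(\bv{g})$ becomes $c_j q\frac{d}{dq}-l-m_j(\bv{g})$, yielding
\[
\prod_{j=1}^N\prod_{l=0}^{M_j-1}\left(c_j q\frac{d}{dq} -l-m_j(\bv{g})\right)\iI_{\bv{k(g)}}(u=q^{1/r})=\iI_{\bv{k}}(u=q^{1/r}),
\]
which is Equation~\ref{e:hopeitsnotwrong}.

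This is essentially a routine change of variables, so I do not expect a genuine obstacle. The only point requiring a moment's care is confirming that the chain-rule identity holds as a true analytic identity, and not merely a formal one, uniformly on the sector where $\iI_{\bv{k}}$ is defined and analytically continued. Because $\iI_{\bv{k}}$ is holomorphic on the relevant sector $|\arg(u)|<\min(\pi/r,\pi/2)$ (respectively $0<\arg(u)<\min(2\pi/r,\pi/2)$) and $u\mapsto q^{1/r}$ is a biholomorphism of the corresponding sectors in the $q$-plane, the pullback commutes with $q\frac{d}{dq}$ genuinely, and the substitution is legitimate.
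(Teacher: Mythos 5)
Your proposal is correct and is exactly the paper's own argument: the paper proves Corollary~\ref{PF-corollary} in one line as ``a consequence of Lemma~\ref{lemma_picard_fuchs} and the change of variables $q=u^r$,'' which is precisely your computation $u\frac{d}{du}=r\,q\frac{d}{dq}$ applied to Equations~\ref{picard1} and~\ref{picard2}. Your added remark on the single-valuedness of $q\frac{d}{dq}$ and the sectorial biholomorphism is a fine (if unstated in the paper) justification of the substitution, but does not change the route.
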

\begin{proof}
This is a consequence of Lemma~\ref{lemma_picard_fuchs} and the change of variables $q=u^r$.
\end{proof}
\begin{remark} Note that, still using the notation of the previous lemma, one can also check that
\begin{equation}\label{e:remarkeq}
\prod_{j=1}^N\prod_{l=0}^{M_j-1}\left(c_j q\frac{d}{dq} -l-m_j(\bv{g})\right)I^{\cY}_{\bv{k(g)}}(q,z=1)=I^{\cY}_{\bv{k}}(q,z=1).
\end{equation}
\end{remark}
\begin{corollary}\label{corollary_linear_k} For all $\bv{g} \in \bar{G}$,
there exists a unique linear transformation \[L_{\bv{g}}:H^*_{\bv{g}}(\cY) \rightarrow H^*_{\bv{g}}(\cX)\] such that for all $ \bv{k}\in(\ZZ_{\geq 0})^{\bar{G}}$ satisfying $  \prod_{g \in \bar{G}} g^{k_g} = \bv{g}$,
\begin{equation}\label{linear_k}
L_{\bv{g}}\cdot I^{\cY}_{\bv{k}}(q,z=1)=\iI_{\bv{k}}(u=q^{1/r}).
\end{equation}

\end{corollary}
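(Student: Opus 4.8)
The plan is to reduce everything to the single ``base'' index $\bv{k(g)}$ and then propagate the resulting transformation to all $\bv{k}$ with $\prod_{g \in \bar{G}} g^{k_g} = \bv{g}$ by means of the shared differential operators. First I would observe that, by Corollary~\ref{PF-corollary}, the vector-valued function $\iI_{\bv{k(g)}}(u=q^{1/r})$ satisfies the scalar Picard--Fuchs equation \eqref{PF_equation_k} componentwise (with $a(\bv{k(g)})^j = m_j(\bv{g})$), and so does $I^{\cY}_{\bv{k(g)}}(q,z=1)$. Written in the Euler operator $\theta = q\frac{d}{dq}$, the operator in \eqref{PF_equation_k} has leading term $\prod_{j,l}(c_j\theta - l - m_j(\bv{g}))$ of order $\sum_j c_j$, since $r = \sum_j c_j - d > 0$; hence its local solution space has dimension $\sum_j c_j = \dim H^*_{\bv{g}}(\cY)$.

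The key input is that the $\sum_j c_j$ components of $I^{\cY}_{\bv{k(g)}}(q,z=1)$, taken in the basis $\{\tilde\ii_{\jj^{-k_0}\bv{g}}H^s\}$ of $H^*_{\bv{g}}(\cY)$, form a complete, linearly independent set of solutions of \eqref{PF_equation_k}. This is the structural fact about hypergeometric $I$-functions of toric stacks (cf.\ \cite{CCIT2}, and the analogous statement used in the worked example above); linear independence follows from the distinct indicial exponents $(l+m_j(\bv{g}))/c_j$, together with the $\log q$ terms supplied by the nilpotent factor $q^{H/z}$ when those exponents collide. Granting this, each of the $d$ components of $\iI_{\bv{k(g)}}(u=q^{1/r})$, being a solution of the same equation, expands uniquely in this basis. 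Reading off these expansion coefficients as a $d \times \sum_j c_j$ matrix defines a linear map $L_{\bv{g}}\colon H^*_{\bv{g}}(\cY)\to H^*_{\bv{g}}(\cX)$ with $L_{\bv{g}}\cdot I^{\cY}_{\bv{k(g)}}(q,z=1)=\iI_{\bv{k(g)}}(u=q^{1/r})$; uniqueness of $L_{\bv{g}}$ on all of $H^*_{\bv{g}}(\cY)$ is precisely the linear independence of the components of $I^{\cY}_{\bv{k(g)}}$.

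To pass from $\bv{k(g)}$ to an arbitrary $\bv{k}$ with $\prod_g g^{k_g} = \bv{g}$, I would invoke the two differential-operator identities \eqref{e:hopeitsnotwrong} and \eqref{e:remarkeq}. Setting $D_{\bv{k}} := \prod_{j=1}^N\prod_{l=0}^{M_j-1}(c_j q\frac{d}{dq}-l-m_j(\bv{g}))$ with $M_j = a(\bv{k})^j - m_j(\bv{g}) \geq 0$ (Lemma~\ref{lemma_picard_fuchs}), these read $D_{\bv{k}}\iI_{\bv{k(g)}} = \iI_{\bv{k}}$ and $D_{\bv{k}}I^{\cY}_{\bv{k(g)}} = I^{\cY}_{\bv{k}}$. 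Since $L_{\bv{g}}$ acts only on the cohomological target factor while $D_{\bv{k}}$ is a differential operator in $q$ with scalar coefficients, the two commute, so
\[
L_{\bv{g}}\cdot I^{\cY}_{\bv{k}} = L_{\bv{g}}\cdot D_{\bv{k}} I^{\cY}_{\bv{k(g)}} = D_{\bv{k}}\bigl(L_{\bv{g}}\cdot I^{\cY}_{\bv{k(g)}}\bigr) = D_{\bv{k}}\iI_{\bv{k(g)}} = \iI_{\bv{k}},
\]
which is \eqref{linear_k}. Uniqueness for the whole family then follows from uniqueness in the base case, since $\bv{k(g)}$ is itself one of the admissible indices.

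I expect the main obstacle to be justifying the completeness and linear independence of the components of $I^{\cY}_{\bv{k(g)}}$ as solutions of \eqref{PF_equation_k} --- equivalently, that the hypergeometric modification introduces no degeneracies and that the $\sum_j c_j$ components genuinely span the solution space. Everything else is either a direct appeal to Corollary~\ref{PF-corollary} or a formal manipulation with the commuting operators $D_{\bv{k}}$.
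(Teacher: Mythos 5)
Your proposal follows essentially the same route as the paper's own proof: first obtain $L_{\bv{g}}$ at the base index $\bv{k(g)}$ from the fact that the components of $I^{\cY}_{\bv{k(g)}}(q,z=1)$ form a complete set of solutions to the Picard--Fuchs equation \eqref{PF_equation_k} while $\iI_{\bv{k(g)}}(u=q^{1/r})$ solves the same equation (Corollary~\ref{PF-corollary}), then propagate to all admissible $\bv{k}$ by applying the operator $\prod_{j=1}^N\prod_{l=0}^{M_j-1}\left(c_j q\frac{d}{dq}-l-m_j(\bv{g})\right)$ via \eqref{e:hopeitsnotwrong} and \eqref{e:remarkeq}. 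The completeness/linear-independence fact you flag as the main obstacle is likewise asserted without proof in the paper, and your added details (the dimension count, the commutation of $L_{\bv{g}}$ with the scalar differential operator, and the uniqueness argument) only make the same argument more explicit.
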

\begin{proof}Since the components of $I^{\cY}_{\bv{k}}(q,z=1)$ are a complete set of solutions to the Picard-Fuchs Equation \ref{PF_equation_k}, by Corollary \ref{PF-corollary} there exists a unique linear transformation $L_{\bv{g}}:H^*_{\bv{g}}(\cY) \rightarrow H^*_{\bv{g}}(\cX)$ such that
\[
L_{\bv{g}}\cdot I^{\cY}_{\bv{k(g)}}(q,z=1)=\iI_{\bv{k(g)}}(u=q^{1/r}),
\]
where $\bv{k(g)}\in(\ZZ_{\geq 0})^{\bar{G}}$ is defined as in~\eqref{e:kgb}.
Applying the differential operator 
\[
\prod_{j=1}^N\prod_{l=0}^{M_j-1}\left(c_j q\frac{d}{dq} -l-m_j(\bv{g})\right)
\]
from Corollary~\ref{corollary_linear_k} to the equation above,
we conclude that \eqref{linear_k} holds whenever $\prod_{g \in \bar{G}} g^{k_g } =\bv{g}$. 
\end{proof}


\textit{Proof of Theorem \ref{theorem_asymptotic_correspondence}}:\\
%
By Corollary \ref{corollary_linear_k} we see that for all $\bv{g} \in \bar{G}$, there exists a unique linear transformation $L_{\bv{g}} :H_{\bv{g}}^*(\cY) \longrightarrow H_{\bv{g}}^*(\cX)$ (recall notations~\ref{n:Gcosets1} and~\ref{n:Gcosets2}) such that $L_{\bv{g}} \cdot I^{\cY}_{\bv{k}}(q,\bt,z=1)=\iI_{\bv{k}} (u=q^{1/r})$ whenever $\prod_{g \in \bar{G}} g^{k_g} = \bv{g}$.
We define $L$ to be the block-diagonal sum \[L = \bigoplus_{\bv{g} \in \bar{G}} L_{\bv{g}}: H^*_{CR, T}(\cY) = \bigoplus_{\bv{g} \in \bar{G}} H^*_{\bv{g}}(\cY) \longrightarrow H^*_{CR,T}(\cX) = \bigoplus_{\bv{g} \in \bar{G}} H^*_{\bv{g}}(\cX).\]
It immediately follows that $L\cdot I^{\cY}(q,\bt,z=1)=\iI (u=q^{1/r})$. We now compute its asymptotic expansion:
\[
\begin{split}
L\cdot I^{\cY}(q,\bt,z=1)&=\iI (u=q^{1/r})\\
&=\sum_{\bv{k}\in(\ZZ_{\geq 0})^{\bar{G}}}\prod_{g \in \bar{G}}\frac{(t^{g})^{k_g}}{k_g!} \iI_{\bv{k}}(u=q^{1/r})\\
&\sim \sum_{\bv{k}\in(\ZZ_{\geq 0})^{\bar{G}}}\prod_{g \in \bar{G}}\frac{(t^{g})^{k_g}}{k_g!} I^{\cX}_{\bv{k}}(t=q^{-1/d},z=1)\quad\text{as $q\rightarrow \infty$}\\
&=I^{\cX}(t=q^{-1/d}, \bt,z=1).
\end{split}
\]
This concludes the proof of the theorem.
\begin{flushright}
\qedsymbol
\end{flushright}
\subsection{The correspondence for $\text{disc}(|f|)<0$}\label{correspondence_r_negative} For the case in which $\text{disc}(|f|)<0$, we have a result analogous to Theorem~\ref{theorem_asymptotic_correspondence}. In this case, however, the roles of $\cY$ and $\cX$ are interchanged. This means that it is possible to obtain the genus zero Gromov-Witten invariants of $\cY$ from the genus zero invariants of $\cX$. 
\begin{theorem}\label{theorem_asymptotic_correspondence2}
Let $\text{disc}(|f|)<0$. There exists a  unique linear transformation $L:H_{CR,T}^{\ast}(\cX)\longrightarrow H_{CR,T}^{\ast}(\cY)$ such that 
\[
L\cdot I^{\cX}(t,\bt,z=1)\sim I^{\cY}(q=t^{-d}, \bt,z=1)\quad\text{as $t\rightarrow\infty$}.
\]
\end{theorem}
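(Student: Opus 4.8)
The plan is to mirror the proof of Theorem~\ref{theorem_asymptotic_correspondence} with the roles of $\cY$ and $\cX$ exchanged, which is natural because $\text{disc}(|f|)<0$ means $r=\sum_j c_j - d<0$. First I would introduce the regularized $I$-function associated to $\cY$. Since now $r<0$, the power $\tau^{r(k_0/d+\lambda)}$ in the previous construction decays for large $k_0$ in the wrong direction, so instead I would regularize $I^{\cY}_{\bv{k}}$ by dividing the $q^{k_0/d}$ terms by an appropriate Gamma factor $\Gamma(1-|r|(k_0/d + \lambda + H))$ (or the analogous factor that turns the series into one of finite radius of convergence). Concretely, set
\[
J^{reg}_{\bv{k}}(\tau) := \sum_{k_0 \geq 0}\frac{\tau^{|r|(k_0/d + \lambda + H)}}{\Gamma(1+|r|(k_0/d+\lambda+H))}\cdot(\text{the Gamma-quotient from }I^{\cY}_{\bv{k}})\,\tilde\ii_{\jj^{-k_0}\prod_g g^{k_g}},
\]
and verify by the ratio test that $J^{reg}_{\bv{k}}(\tau)$ is holomorphic in a disk of positive radius and that it satisfies a regularized Picard--Fuchs equation, analogous to~\eqref{regularized_equation}, whose singular points are all regular; this lets us analytically continue $J^{reg}_{\bv{k}}$ to $\tau=\infty$ along a suitable ray.

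Next I would apply the Laplace transform $\jj_{\bv{k}}(u):=u\int_0^\infty e^{-u\tau}J^{reg}_{\bv{k}}(\tau)\,d\tau$ and invoke Watson's lemma exactly as before to read off the asymptotic expansion of $\jj_{\bv{k}}(u)$ as $u\to\infty$ in an appropriate sector, and check that under the change of variables the expansion matches $I^{\cY}_{\bv{k}}(q=t^{-d},z=1)$. The third step is the analogue of Lemma~\ref{lemma_picard_fuchs} and its corollaries: using property~\eqref{Laplace_property} of the Laplace transform, I would show that $\jj_{\bv{k}}(u)$ satisfies the Picard--Fuchs equation of $\cX$ (the one whose complete solution set is furnished by the components of $I^{\cX}_{\bv{k}}$), together with a recursion relating $\jj_{\bv{k}}$ to $\jj_{\bv{k(g)}}$. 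Since the components of $I^{\cX}_{\bv{k}}(t,z=1)$ form a complete set of solutions of that equation, there is a unique linear transformation $L_{\bv{g}}:H^*_{\bv{g}}(\cX)\to H^*_{\bv{g}}(\cY)$ sending $I^{\cX}_{\bv{k}}$ to $\jj_{\bv{k}}$ for every $\bv{k}$ with $\prod_g g^{k_g}=\bv{g}$, and the block-diagonal sum $L=\bigoplus_{\bv{g}\in\bar G}L_{\bv{g}}$ assembles these into the desired map. Summing over $\bv{k}$ with the weights $\prod_g (t^g)^{k_g}/k_g!$ then gives $L\cdot I^{\cX}(t,\bt,z=1)=\jj(u)\sim I^{\cY}(q=t^{-d},\bt,z=1)$.

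The main obstacle I anticipate is not the formal structure, which is genuinely symmetric, but getting the regularization and the sector of validity of Watson's lemma correct when $r<0$. In the positive-discrepancy case the exponent $r(k_0/d+\lambda)$ grows, and dividing by $\Gamma(1+r(k_0/d+\lambda))$ produces a convergent series in $\tau$; when $r<0$ one must reorganize so that it is the $\cY$-series (in $q=t^{-d}$, i.e.\ the variable that is now being \emph{expanded asymptotically}) that appears on the right and the $\cX$-series that provides the complete solution set and hence the honest, convergent regularized function. I would need to check carefully that the Gamma-quotient in $I^{\cY}_{\bv{k}}$ has the right growth to make $J^{reg}_{\bv{k}}$ entire of the requisite exponential type, and that the admissible sector $|\arg u|<\min(\pi/|r|,\pi/2)$ (or the corresponding one-sided sector when $\sum_j c_j$ is even) avoids all the regular singular points; once these analytic points are pinned down, the remainder of the argument transcribes verbatim from the proof of Theorem~\ref{theorem_asymptotic_correspondence}.
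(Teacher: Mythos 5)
Your proposal is correct and takes essentially the same approach as the paper: the paper's entire proof of this theorem is the remark that it is ``almost identical to the proof of Theorem~\ref{theorem_asymptotic_correspondence} once we interchange the roles of $\cX$ and $\cY$,'' and that interchange---Borel-regularizing the now-divergent $I^{\cY}_{\bv{k}}$, applying the Laplace transform and Watson's lemma, and pinning down a unique $L_{\bv{g}}$ via the completeness of the components of $I^{\cX}_{\bv{k}}$ as solutions of the corresponding Picard--Fuchs equation---is exactly what you carry out. The only small correction is in the regularizing factor: since the exponent of $q$ in $I^{\cY}_{\bv{k}}$ is $k_0/d + H$ (with no $\lambda$), the Gamma factor should be $\Gamma\left(1+|r|\left(k_0/d+H\right)\right)$ rather than one involving $\lambda$, so that Watson's lemma reproduces precisely the exponents of $I^{\cY}(q=t^{-d},\bt,z=1)$.
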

The proof of this result is almost identical to the proof of Theorem \ref{theorem_asymptotic_correspondence} once we interchange the roles of $\cX$ and $\cY$.

As in the case $\text{disc}(|f|)>0$, we have the following important consequence:
\begin{corollary}
If  $\text{disc}(|f|)<0$ then the genus zero Gromov-Witten invariants of $\cY$ are completely determined by the genus zero Gromov-Witten invariants of $\cX$.
\end{corollary}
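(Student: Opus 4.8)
The plan is to run the proof of Theorem~\ref{theorem_asymptotic_correspondence} essentially verbatim, interchanging the roles of $\cX$ and $\cY$ and replacing $r$ by $\rho:=-r=d-\sum_{j=1}^N c_j$, which is now positive. The first thing to record is that the convergence dichotomy reverses. The same ratio-test estimate used for the regularized $I$-function shows that when $\rho>0$ the coefficient of $t^{k_0}$ in $I^{\cX}_{\bv{k}}(t,1)$ decays super-factorially, so $I^{\cX}$ is entire in $t$, while the coefficient of $q^{k_0/d}$ in $I^{\cY}_{\bv{k}}(q,1)$ grows like $k_0^{k_0\rho/d}$, so $I^{\cY}$ is merely a formal (Gevrey) series. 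Hence it is now $I^{\cY}$ that must be recovered as an asymptotic expansion and $I^{\cX}$ that plays the role of the holomorphic function.

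First I would regularize the divergent series $I^{\cY}_{\bv{k}}$, replacing $q^{k_0/d}$ by $\tau^{\rho(k_0/d+H)}/\Gamma\bigl(1+\rho(k_0/d+H)\bigr)$; the added $\Gamma$-factor cancels the growth rate $k_0^{k_0\rho/d}$ and produces a series $I^{reg,\cY}_{\bv{k}}(\tau)$ with finite radius of convergence, exactly as in the definition of $I^{reg}_{\bv{k}}$. The one genuinely new feature, absent in the positive-discrepancy case, is the nilpotent hyperplane class $H$ appearing in $I^{\cY}$; since $H$ is nilpotent it is carried along as a formal parameter (its powers of $\log\tau$ truncate), which poses no analytic difficulty but must be tracked through every $\Gamma$-factor. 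I would then check, by the manipulation that produced~\eqref{regularized_equation}, that $I^{reg,\cY}_{\bv{k}}(\tau)$ satisfies a regularized Picard--Fuchs equation all of whose singularities are regular, so that $I^{reg,\cY}_{\bv{k}}$ continues to $\tau=\infty$ along suitable rays and its Laplace transform $\iI^{\cY}_{\bv{k}}(u):=u\int_0^\infty e^{-u\tau}I^{reg,\cY}_{\bv{k}}(\tau)\,d\tau$ is well defined. Watson's lemma then yields $\iI^{\cY}_{\bv{k}}(u)\sim I^{\cY}_{\bv{k}}(q=t^{-d})$ as $t\to\infty$, under the substitution $q=u^{-\rho}$, i.e.\ $u=t^{d/\rho}$.

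The heart of the argument is the analogue of Lemma~\ref{lemma_picard_fuchs}: applying the Laplace identity~\eqref{Laplace_property} iteratively to the regularized $\cY$-equation, I would verify that the operator annihilating $\iI^{\cY}_{\bv{k}}(u=t^{d/\rho})$ is precisely the Picard--Fuchs operator of $\cX$ (a GKZ operator of order $d$), together with the divisibility relations corresponding to~\eqref{picard2} and~\eqref{e:hopeitsnotwrong} that reduce a general $\bv{k}$ to $\bv{k(g)}$. Since the components of $I^{\cX}_{\bv{k}}$ form a complete set of solutions of that order-$d$ equation, the construction of Corollary~\ref{corollary_linear_k} then produces, for each $\bv{g}\in\bar{G}$, a unique linear map $L_{\bv{g}}:H^*_{\bv{g}}(\cX)\to H^*_{\bv{g}}(\cY)$ with $L_{\bv{g}}\cdot I^{\cX}_{\bv{k}}(t,1)=\iI^{\cY}_{\bv{k}}(u=t^{d/\rho})$ whenever $\prod_{g}g^{k_g}=\bv{g}$. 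Assembling the block-diagonal sum $L=\bigoplus_{\bv{g}}L_{\bv{g}}$ and summing over $\bv{k}$ exactly as at the end of the proof of Theorem~\ref{theorem_asymptotic_correspondence} gives $L\cdot I^{\cX}(t,\bt,1)=\iI^{\cY}(u=t^{d/\rho})\sim I^{\cY}(q=t^{-d},\bt,1)$, as desired.

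I expect the main obstacle to lie in this central computation. One must confirm that interchanging $r\leftrightarrow-r$ moves the monomial multiplier (the factor that was $\tau^r$, now $u^{\rho}$) from one product in the GKZ operator to the other and yields \emph{exactly} the $\cX$-operator rather than a merely similar one, and that the $H$-dependent $\Gamma$-factors transform correctly under~\eqref{Laplace_property}. A secondary technical point is to ensure that the angular sector in Watson's lemma is nonempty in the negative-discrepancy range: the singular locus of the regularized $\cY$-equation is now governed by $\rho$ and the parity of the relevant exponent, and one must verify that a ray of integration can avoid it, in analogy with the odd/even distinction on $\sum_j c_j$ made for Theorem~\ref{theorem_asymptotic_correspondence}.
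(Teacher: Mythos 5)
Your technical core is sound, and it is exactly the paper's own route: the paper proves Theorem~\ref{theorem_asymptotic_correspondence2} by interchanging the roles of $\cX$ and $\cY$ in the proof of Theorem~\ref{theorem_asymptotic_correspondence}, which is precisely your regularize/Laplace/Watson argument with $\rho=-r>0$. Your reversed convergence dichotomy is correct ($I^{\cX}$ becomes entire, $I^{\cY}$ becomes Gevrey-divergent), the GKZ operator in the $t$-chart indeed has order $d=\dim H^*_{\bv{g}}(\cX)$ so that the components of $I^{\cX}_{\bv{k}}$ form a complete solution set, and the substitution $q=u^{-\rho}$, $u=t^{d/\rho}$ is the right one; the reduction to $\bv{k(g)}$ via the analogues of~\eqref{picard2}, \eqref{e:hopeitsnotwrong} and~\eqref{e:remarkeq} also goes through as you describe.

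However, what you have proved is the Theorem, not the Corollary. The Corollary asserts that the genus zero Gromov--Witten \emph{invariants} of $\cY$ are determined by those of $\cX$, and your argument stops at the asymptotic relation $L\cdot I^{\cX}(t,\bt,1)\sim I^{\cY}(q=t^{-d},\bt,1)$ ``as desired,'' never passing from this relation to the invariants. The missing bridge — which is the entire content of the paper's proof of the corresponding corollary in the positive-discrepancy case — consists of three steps: (i) a power series asymptotic expansion, when it exists, is unique, so $I^{\cY}(q=t^{-d},\bt,z=1)$ is pinned down by $L\cdot I^{\cX}(t,\bt,z=1)$ and hence by the genus zero theory of $\cX$; (ii) the specialization $z=1$ loses no information, since the full $z$-dependence is restored by the grading-operator identities of Section~\ref{z=1}; (iii) $I^{\cY}$ is a \emph{big} $I$-function, so Lemma~\ref{l:BF} (Birkhoff factorization) recovers the $J$-function $J^{\cY}$ from it, and the $J$-function determines the genus zero theory. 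Without (ii) and especially (iii), knowing the asymptotic expansion of $L\cdot I^{\cX}$ does not yet yield any invariant of $\cY$. These steps are routine given the machinery already established in the paper, but they are exactly what makes the Corollary a corollary, so your write-up needs to include them.
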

\section{FJRW theory and Landau--Ginzburg correspondences}\label{s:FJRW}

In this section we prove that the FJRW theory of the pair $(W,G)$ from Section~\ref{s:BU} is related to the Gromov--Witten theory of the hypersurface $\cZ:=\{W = 0\} \subset \PP(G)$ via asymptotic expansion as in the previous section.  This generalizes the results of the first author in \cite{A} beyond the case $\PP(G) = \PP(c_1, \ldots, c_N)$, and may be viewed as extending the Landau--Ginzburg/Calabi--Yau correspondence \cite{ChR, ChIR, PS, LPS} to the setting where $\cZ$ is no longer Calabi--Yau.  For simplicity we will term these types of theorems \emph{Landau--Ginzburg correspondences}.
While not strictly a consequence of Theorems~\ref{theorem_asymptotic_correspondence} and~\ref{theorem_asymptotic_correspondence2}, we will show how the proofs above may be modified slightly to yield the desired correspondences.

\subsection{Setup}
We first recall notation from Section~\ref{s:BU}.  We are given a Fermat polynomial $W = X_1^{d/c_1} + \cdots + X_N^{d/c_N}$ with $\gcd(c_1, \ldots, c_N) = 1$ and a group $G$ of diagonal automorphisms of $W$.  $W$ is quasihomogeneous of degree $d$:
\[W(\alpha^{c_1}X_1, \ldots, \alpha^{c_N}X_N) = \alpha^d W(X_1, \ldots, X_N)\]
for all $\alpha \in \CC$.
We assume that $G$ contains the distinguished element $\jj = \exp(2\pi i \diag (q_1, \ldots, q_N))$,
where $q_j = c_j/N$.  For the correspondence to hold, we also require  that the line bundle $\sO(-d) \to \PP(G)$ be pulled back from the coarse underlying space of $\PP(G)$.  This corresponds to the somewhat technical condition that all $g \in G$ which fix at least one coordinate lie in $SL_N(\CC)$.  We call $(W,G)$ a \emph{Landau--Ginzburg} pair.
\subsection{FJRW theory}

Given a non-degenerate quasi-homogeneous polynomial $W = W(X_1, \ldots, X_N)$ and an \emph{admissible} group $G$ \cite{FJR1}, one can define the corresponding \emph{FJRW} invariants, which may be viewed as the analogue of Gromov--Witten invariants for the singular space $\{W = 0\} \subset [\CC^N/G]$.  They are defined as integrals over a cover of the moduli space of stable curves, and obey many of the same axioms as Gromov--Witten invariants.  

In general the definition of FJRW invariants is quite involved, but in genus zero, when $W$ is a Fermat polynomial, the situation simplifies greatly.  In what follows we will always assume that the conditions on $(W,G)$ given above are satisfied.

We first define the state space.
\begin{definition}  \label{d:2.5}
Given a Landau--Ginzburg pair $(W,G)$, the \emph{narrow FJRW state space} is given by 
\[
 \cH_{FJRW}(W, G) := \bigoplus_{g \in G_{nar}} \CC \phi_g,
\] 
where
\[G_{nar} : =\{g \in G| g\jj \text{ fixes only the origin in } \CC^N\}\]
and $\phi_g$ is a vector formally associated to $g \in G_{nar}$.
\end{definition}

%
\begin{definition}
Given a Landau--Ginzburg pair $(W,G)$ as above, the  moduli space $W_{h,n; G}$ of $W$-structures parametrizes 
families of orbifold curves $\cC$ together with an $N$-tuple $(\cL_j, \varphi_j)$ such that
\begin{itemize}
\item $\cC$ has $\mu_d$ isotropy at each marked point and node, and the coarse space $|\cC|$ is a (family of) $n$-pointed genus $h$ stable curve,
\item $\cL_j$ is a line bundle on $\cC$ and $\varphi_j$ is an isomorphism $\varphi_j: \cL_j^{\otimes d} \stackrel{\sim}{\to} \omega_{\cC, \log}^{\otimes c_j}$ such that,
\item for any Laurent monomial  $\prod_{j = 1}^N X_j^{b_j}$ invariant under $G$, 
\[\bigotimes_{j = 1}^N \cL_j^{\otimes b_j} \cong \omega_{\cC, \log}.\]
\end{itemize}
\end{definition}
See \cite{FJR1, LPS} for more details.
%
%
%

As a consequence of the third condition above, given a point $(\cC, \cL_j, \phi_j)$ in $W_{h, n; G}$, at each marked point $p_i$ the isotropy acts on fibers of $\oplus_{j=1}^N \cL_j$ by an element of $G$.  
One may therefore decompose $W_{h, n, G}$ into a union of open and closed substacks based on the action of the corresponding isotropy group.  Let 
\[  W_{h, n, G}(g_1, \ldots, g_n)  \] 
denote the substack where the isotropy generator at $p_i$ acts by $g_i$.  

In analogy to Gromov--Witten theory, FJRW invariants are defined by integrating against a virtual cycle on $W_{h,n; G}$.
When $W$ is a Fermat polynomial and the genus is zero,
 one can prove \cite{FJR1} that 
$$R^0\pi_* (\oplus_{j=1}^N \cL_j) = 0$$ and 
$$-R\pi_* (\oplus_{j=1}^N \cL_j) =  R^1(\oplus_{j=1}^N \cL_j)[-1] $$ 
is a vector bundle.  Let $\phi_{g_1}, \ldots, \phi_{g_n}$ be elements of the state space $\cH_{FJRW}(W,G)$, and let $a_1, \ldots, a_n$ be non-negative integers.
By axiom (5a) of \cite[Theorem~4.1.8]{FJR1}, one defines
\begin{equation} \label{e:2.2.1}
\br{ \psi^{a_1} \phi_{ g_1}, \ldots, \psi^{a_n} \phi_{  g_n} }^{(W,G)}_{0,n} 
:=  {\bar d}^N \int_{W_{0, n, G}(g_1\jj, \ldots, g_n\jj)}
\frac{\prod_{i = 1}^n \psi_i^{a_i}}{e \left(R\pi_* ( \oplus_{{j}=1}^N   \cL_{j})^{\vee} \right)}.
\end{equation}  

FJRW theory is a cohomological field theory.  Furthermore, all of the axioms of an \emph{axiomatic Gromov--Witten theory} \cite{YPnotes, LPS} are satisfied.  Therefore quantum cohomology, the Dubrovin connection, the $J$-function, and $I$-functions may all be defined and behave in the same manner as in Gromov--Witten theory.  Thus Section~\ref{s:formal} transfers almost word for word to the FJRW setting.  See \cite{LPS}, Section~3, for more details.
We will denote the $J$-function and $I$-functions coming from the FJRW theory of $(W, G)$ by $J^{(W,G)}$ and $I^{(W,G)}$ respectively.

\begin{remark}
The reader may observe a strange shift in Definition~\eqref{e:2.2.1},  the $\phi_{g_i}$ insertion at the $i$th marked point corresponds to an isotropy action of $g_i \jj$.  This shift is chosen to so that the $\phi_{id}$ element of the state space is the identity in quantum cohomology.  
\end{remark}

\begin{remark}
FJRW theory is defined for a more general class of insertions corresponding to $g \in G \setminus G_{nar}$ (in analogy with primitive cohomology classes of a hypersurface).  We will content ourselves in this paper with statements relating the narrow invariants from $G_{nar}$ to the ambient cohomology of $\cZ$.
\end{remark}

\subsection{$I$-functions}

Rather than directly compute the relevant $I$-functions for the Landau--Ginzburg correspondences, we will apply two results, the \emph{multiple log canonical} (MLK) and \emph{quantum Serre duality} (QSD) correspondences to obtain $I^{(W,G)}$ and $I^{\cZ}$ from $I^{\cX}$ and $I^{\cY}$ respectively. The MLK correspondence says roughly that $I^{(W,G)}$ is related to $I^{\cX}$ by differentiation, linear transformation, and a nonequivariant limit.  As a consequence, we will see that the Picard--Fuchs
differential equations satisfied by $I^{(W,G)}$ are closely related to those for $I^{\cX}$ described in the previous section.  The QSD correspondence is analogous.

\subsubsection{The $I$-function of $\cX$}

Applying the MLK correspondence \cite[Theorem~5.12]{LPS}, an FJRW $I$-function for $(W, G)$ is given by  
\[I^{(W,G)}(t, \bt,z) =  \lim_{\lambda \mapsto 0}\Delta^\circ \left( z t \frac{d}{dt} I^\cX(t, \bt, z)\right),\]
where $\Delta^\circ$ is the linear map defined by
\[\Delta^\circ:= \ii_g \mapsto
\left\{ \begin{array}{ll}
(-1)^{\age(g)} \phi_{g \jj^{-1}} & \text{if $g$ fixes only the origin}\\
0 & \text{otherwise.}
\end{array}
\right.
\]
%
We obtain:
\begin{align}
 \nonumber
I^{(W,G)}(t, \bt,z) = &z\sum_{\bv{k}\in(\ZZ_{\geq 0})^{\bar{G}}}\prod_{g \in \bar{G}}\frac{(t^{g})^{k_g}z^{(\age(g)-1)k_g}}{k_g!}\sum_{k_0\geq 0}\frac{t^{k_0+1}z^{1 + (k_0+1)r/d}}{z^{\sum_j\langle (k_0+1)q_j+a(\bv{k})^j\rangle}k_0!}\\
  & \cdot \prod_{j=1}^N \frac{\Gamma(1- \langle (k_0+1)q_j+a(\bv{k})^j\rangle)}{\Gamma(1- (k_0+1)q_j-a(\bv{k})^j)}(-1)^{\age(\jj^{k_0}\prod_g g^{k_g})}\phi_{\jj^{k_0}\prod_g g^{k_g}} \label{e:IWG}
\end{align}
where $\phi_g$ is understood to be  zero if $g \notin G_{nar}$.

Now, given $\bv{g} \in \bar{G}$, consider the $t^{\bv{g}}$-coefficient of $I^{(W,G)}(t, \bt,z)$, denote it by $I^{(W,G)}_{\bv{g}}(t, \bt,z)$:
\begin{align}
\nonumber
I^{(W,G)}_{\bv{g}}(t, \bt,z) = &z\sum_{k_0\geq 0}\frac{t^{k_0+1}z^{1 + (k_0+1)r/d}}{z^{\sum_j\langle (k_0+1)q_j+m_j(\bv{g})\rangle}k_0!}\\
  & \cdot \prod_{j=1}^N \frac{\Gamma(1- \langle (k_0+1)q_j+m_j(\bv{g})\rangle)}{\Gamma(1- (k_0+1)q_j-m_j(\bv{g}))}(-1)^{\age(\jj^{k_0}\prod_g g^{k_g})} \phi_{\jj^{k_0}\bv{g}}  \label{e:IWGg}.
\end{align}  

Note that since $\phi_{j^k \bv{g}} = 0$ if $j^k \bv{g}$ is not narrow, the function $I^{(W,G)}_{\bv{g}}(t, \bt,z)$ is supported on the space 
\[\cH_{\bv{g}}(W,G) := \bigoplus_{\left\{ \substack{ 0 \leq k \leq d-1 \\ j^k\bv{g} \in G_{nar}} \right\} } \CC \phi_{j^k g}
\] of dimension $\dim(H^*_{\bv{g}}(\cX)) - \#\{0 \leq k \leq d-1| j^k \bv{g} \text{ fixes a coordinate}\}$.

We claim that $I^{(W,G)}_{\bv{g}}(t, \bt,z)$ satisfies a Picard--Fuchs equation obtained from that of 
$I^\cX_{\bv{g}}(t, \bt, z)$ by removing factors.  The nonequivariant limit of the original equation for $\cX$ is $\cD_{t, \bv{g}} = 0$, where
\begin{align*} \cD_{t, \bv{g}} =& \prod_{l=0}^{d-1} \left( zt\frac{d}{dt} - lz\right) - t^d \prod_{j=1}^N \prod_{l = 0}^{c_j - 1} \left((-c_j/d) zt\frac{d}{dt} - lz - m_j(\bv{g}) z\right)
\end{align*} 
Note first that by construction of our splitting $\bar{G}$, $m_1(\bv{g}) = 0$.  Thus we can factor a $zt\frac{d}{dt}$ from the right side of $\cD_{t, \bv{g}}$ since the $j = 1$, $l = 0$ term in the right hand product is a multiple of $zt\frac{d}{dt}$.  It is immediate that $I^{(W,G)}_{\bv{g}}(t, \bt,z)$ is annihilated by this reduced operator, because it was obtained by applying $zt\frac{d}{dt}$ to $I^\cZ_{\bv{g}}(t, \bt, z)$.  Next, assume that $\jj^k \bv{g}$ fixes coordinate $i$, for some $1 \leq k \leq d-1$.   We claim that $\cD_{t, \bv{g}}$ contains a factor of $(zt\frac{d}{dt} - kz)$ on the left.

Note first that $m_i(\bv{g}) = f c_i/d$ for some $1 \leq f \leq d/c_i - 1$, thus $k = m d/c_i - f$ for $1 \leq m \leq c_i$.  Consider the factor $ \left((-c_j/d )zt\frac{d}{dt} - lz - m_j(\bv{g}) z\right)$ in the right hand product of $\cD_{t, \bv{g}}$ for $l = c_i - m$.  Commuting this with $t^d$ yields
\begin{align*}
t^d \left((-c_i/d) zt\frac{d}{dt} - (c_i - m)z - m_i(\bv{g}) z\right) = &\left((-c_i/d) (zt\frac{d}{dt} - d)- (c_i - m) z - m_i(\bv{g}) z\right)t^d \\
=& -(c_i/d) \left( zt\frac{d}{dt} - (m d/c_i -  m_i(\bv{g})d/c_i )\right) t^d\\
=& -(c_i/d) \left( zt\frac{d}{dt} - k \right) t^d.
\end{align*}
We can thus factor $\left( zt\frac{d}{dt} - k \right)$ from the left side of $\cD_{t, \bv{g}}$ for every power of $k$ such that $\jj^k \bv{g}$ fixes a coordinate.  We obtain
\begin{equation}\label{e:PFred}
\cD_{t, \bv{g}} = \prod_{\left\{ \substack{ 1 \leq k \leq d-1 \\ j^k\bv{g} \text{ fixes a coordinate}} \right\}}\left( zt\frac{d}{dt} - k \right)\cdot  \cD_{t, \bv{g}}^{irr} \cdot zt\frac{d}{dt}.
\end{equation}
One can easily check that the $I$-function \eqref{e:IWGg} satisfies the Picard--Fuchs equation $\cD_{t, \bv{g}}^{irr} = 0$.

\subsubsection{The $I$-function of $\cZ$}
Let $i: \cZ \to \PP(G)$ be the inclusion of the hypersurface defined by the equation $\{W = 0\}$.  
By Quantum--Serre--Duality of Coates--Givental \cite{CG} and Tseng \cite{Ts}, an $I$-function for $\cZ$ is given by
\[I^{\cZ}(q,\bt,z)  = \lim_{\lambda \mapsto 0} i^* \left( \frac{ z  q\frac{d}{dq} I^\cY(\pm q, \bt, z)}{-d(\lambda + H)}\right).
\]
where $\pm q$ denotes the substitution $q^{k_0/d} \mapsto (-1)^{k_0} q^{k_0/d}$.
We obtain:
\begin{align} \nonumber
I^{\cZ}(q,\bt,z) =&zq^{H/z} \sum_{\bv{k}\in(\ZZ_{\geq 0})^{\bar{G}}}\prod_{g \in \bar{G}}\frac{(t^{g})^{k_g}z^{(\age(g)-1)k_g}}{k_g!} \sum_{k_0\geq 0}\frac{ (-1)^{k_0}q^{k_0/d}}{z^{k_0r/d + \sum_j\langle k_0q_j-a(\bv k)^j\rangle }} \\ 
	&\cdot \frac{\Gamma(-dH/z)}{\Gamma(-k_0-dH/z)}\prod_{j=1}^N\frac{\Gamma(1+c_jH/z-\langle -k_0q_j+a(\bv{k})^j \rangle)}{\Gamma(1+c_jH/z+k_0q_j-a(\bv{k})^j)}\tilde \ii_{\jj^{-k_0}\prod_g g^{k_g}},
\end{align}
where we now consider cohomology classes $H^k\tilde \ii_{\jj^{-k_0}\prod_g g^{k_g}}$ as elements of $H^*_{CR}(\cZ)$ via the pullback to the hypersurface.  $I^{\cZ}(q,\bt,z)$ gives a big $I$-function for the \emph{ambient part} of the Gromov--Witten theory of $\cZ$.  In other words, $I^{\cZ}(q,\bt,z)$ determines the quantum cohomology ring of $\cZ$ restricted to classes in $H^{amb}_{CR}(\cZ) \subset H^*_{CR}(\cZ)$ defined as the image of $i^*: H^*_{CR}(\PP(G)) \to H^*_{CR}(\cZ)$.

As in the previous case, given $\bv{g} \in \bar{G}$, let $I^{\cZ}_{\bv{g}}(t, \bt,z)$ the part of $I^{\cZ}(t, \bt,z)$ coming from the $t^{\bv{g}}$-coefficient:
\begin{align} \nonumber
I^{\cZ}_{\bv{g}}(q,\bt,z) =&zq^{H/z}  \sum_{k_0\geq 0}\frac{ (-1)^{k_0}q^{k_0/d}}{z^{k_0r/d + \sum_j\langle k_0q_j-m_j(\bv{g})\rangle}} \\ 
	&\cdot \frac{\Gamma(-dH/z)}{\Gamma(-k_0-dH/z)}\prod_{j=1}^N\frac{\Gamma(1+c_jH/z-\langle -k_0q_j+a(\bv{k})^j \rangle)}{\Gamma(1+c_jH/z+k_0q_j-m_j(\bv{g}))}\tilde \ii_{\jj^{-k_0}\prod_g g^{k_g}}.
\end{align}

Note that top powers of $H$ on any connected component of $I\PP(G)$ will vanish under the pullback.  Thus $I^{\cZ}_{\bv{g}}(t, \bt,z)$ is supported on the space 
\[H^*_{\bv{g}}(\cZ) := i^* \left( H^* \left( \cup_{0 \leq k \leq d-1} \PP(G)_{\jj^k \bv{g}}\right) \right).\]
 of dimension $\dim(H^*_\bv{g}(\cY)) - \#\{0 \leq k \leq d-1| j^k \bv{g} \text{ fixes a coordinate}\}$.


\subsection{Landau--Ginzburg correspondences} We begin this section by establishing the Landau-Ginzburg/Fano correspondence. We assume that $\sum c_j > d$, which implies that $\cZ$ is a Fano hypersurface. The main result we prove in this section is that the genus zero Gromov-Witten theory of $\cZ$ completely determines the genus zero FJRW theory of the pair $(W,G)$. The technical details of the proof of this result are almost identical to the details found Section \ref{the correspondence for toric blowups}. For this reason we avoid some computations.

We start by noting that $I^{\cZ}_{\bv{g}}(q,z)$ satisfies the equation $\cD_{q,\bv{g}}^{irr} = 0$, where the differential operator $\cD_{q,\bv{g}}^{irr}$ is 
equal to $\cD_{t, \bv{g}}^{irr}$  of~\eqref{e:PFred}, but with the change of variables $q = (-t)^d$.


Since the degree of the differential operator $\cD_{q,\bv{g}}^{irr}$ is the same as the dimension of $H^*_{\bv{g}}(\cZ)$, it follows that the components of $I^{\cZ}_{\bv{g}}(q,z)$ are a complete set of solutions to the equation $\cD_{q,\bv{g}}^{irr} = 0$. We next construct a holomorphic function whose asymptotic expansion at infinity is given by $I^{(W,G)}_{\bv{g}}$. Define the regularized version of $I^{(W,G)}_{\bv{g}}$ to be

\begin{align}\nonumber
I^{(W,G),reg}_{\bv{g}}(\tau): = &\sum_{k_0\geq 0}\frac{(-1)^{k_0+1}\tau^{r(k_0+1)/d}}{\Gamma(1+\frac{r}{d}(k_0+1))k_0!}\\
  & \cdot \prod_{j=1}^N \frac{\Gamma(1- \langle (k_0+1)q_j+m_j(\bv{g})\rangle)}{\Gamma(1- (k_0+1)q_j-m_j(\bv{g}))}(-1)^{\age(\jj^{k_0}\prod_g g^{k_g})} \phi_{\jj^{k_0}\bv{g}} .
\end{align}
This series converges on a disk of finite radius centered at $\tau=0$. Moreover, it satisfies a regularized Picard-Fuchs equation $\cD_{\tau,\bv{g}}^{irr, reg}=0$ with regular singularities at $\tau=0,\infty$ and for $\tau$ satisfying $(-\tfrac{\tau}{r})^r=d^d\prod_{j=1}^Nc_j^{-c_j}$. Thus, $I^{(W,G),reg}_{\bv{g}}(\tau)$ can be analytically continued to $\tau=\infty$ along any ray that avoids these singularities.
Define
\[
\mathbb{I}^{(W,G)}_{\bv{g}}(u):=u\int_{0}^{\infty}e^{-u\tau}I^{(W,G),reg}_{\bv{g}}(\tau)d\tau,
\]
where the ray on integration avoids the singular points of $\cD_{\tau,\bv{g}}^{irr, reg}$.
By Watson's lemma, we have that 
\begin{equation}\label{Fano asymptotic equation}
\mathbb{I}^{(W,G)}_{\bv{g}}(u=q^{1/r})\sim I^{(W,G)}_{\bv{g}}(t=-q^{-1/d}, z=1)\text{ as $q\rightarrow\infty$.} 
\end{equation}
After a computation similar to the found in the proof of Corollary \ref{PF-corollary}, one sees that  $ \cD_{q,\bv{g}}^{irr}\cdot\mathbb{I}^{(W,G)}_{\bv{g}}(u=q^{1/r})=0$. We now prove a result similar to Corollary \ref{corollary_linear_k}:
\begin{lemma}\label{Lg for LG/Fano}
For each $\bv{g} \in \bar{G}$, there exists a unique linear transformation $L_{\bv{g}}:H^*_{\bv{g}}(\cZ) \rightarrow \cH_{FJRW}(W, G)$ such that for all $\bv{k} \in(\ZZ_{\geq 0})^{\bar{G}}$ satisfying $\prod_g g^{k_g} = \bv{g}$,
\begin{equation}
L_{\bv{g}}\cdot I^{\cZ}_{\bv{k}}(q,z=1)=\iI_{\bv{k}}^{(W,G)}(u=q^{1/r}).
\end{equation}
\end{lemma}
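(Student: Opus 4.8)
The strategy mirrors the proof of Corollary~\ref{corollary_linear_k}, replacing the Picard--Fuchs system of $\cY$ with its irreducible quotient $\cD_{q,\bv{g}}^{irr}$ and the system of $\cX$ with the corresponding one for $I^{(W,G)}$. The plan is to first establish the result on the ``one-dimensional'' generators $\bv{k(g)}$ of~\eqref{e:kgb}, and then propagate it to all $\bv{k}$ with $\prod_g g^{k_g} = \bv{g}$ by applying an appropriate differential operator.

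\emph{First} I would invoke the two facts already assembled above. On one side, the components of $I^{\cZ}_{\bv{g}}(q,z=1)$ form a complete set of solutions to $\cD_{q,\bv{g}}^{irr} = 0$, since the degree of $\cD_{q,\bv{g}}^{irr}$ equals $\dim(H^*_{\bv{g}}(\cZ))$. On the other side, the function $\iI^{(W,G)}_{\bv{g}}(u = q^{1/r})$ also satisfies $\cD_{q,\bv{g}}^{irr}\cdot\iI^{(W,G)}_{\bv{g}}(u = q^{1/r}) = 0$, which follows from the regularized equation $\cD_{\tau,\bv{g}}^{irr,reg} = 0$ together with the iterated Laplace-transform identity of Equation~\eqref{Laplace_property}, exactly as in Corollary~\ref{PF-corollary}. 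Since the components of $I^{\cZ}_{\bv{k(g)}}(q,z=1)$ span the full solution space of $\cD_{q,\bv{g}}^{irr} = 0$ on $H^*_{\bv{g}}(\cZ)$, there exists a unique linear map $L_{\bv{g}}: H^*_{\bv{g}}(\cZ) \to \cH_{FJRW}(W,G)$ (landing in $\cH_{\bv{g}}(W,G)$) with $L_{\bv{g}}\cdot I^{\cZ}_{\bv{k(g)}}(q,z=1) = \iI^{(W,G)}_{\bv{k(g)}}(u = q^{1/r})$.

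\emph{Next} I would extend the equality from $\bv{k(g)}$ to an arbitrary $\bv{k}$ with $\prod_g g^{k_g} = \bv{g}$. The key is that both $I^{\cZ}_{\bv{k}}$ and $\iI^{(W,G)}_{\bv{k}}$ are obtained from their $\bv{k(g)}$-counterparts by applying the same differential operator $\prod_{j=1}^N \prod_{l=0}^{M_j - 1}\bigl(c_j q\tfrac{d}{dq} - l - m_j(\bv{g})\bigr)$, where $M_j = a(\bv{k})^j - m_j(\bv{g})$. For $\iI^{(W,G)}_{\bv{k}}$ this is the Laplace transform (Equation~\eqref{e:hopeitsnotwrong}) of the raising relation among regularized FJRW functions, which is proved exactly as Equation~\eqref{picard2}; for $I^{\cZ}_{\bv{k}}$ it is the QSD image of the analogous relation~\eqref{e:remarkeq} for $I^{\cY}$, under the change of variables $q = (-t)^d$. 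Since $L_{\bv{g}}$ is linear and commutes with the operator (which acts on the $q$-variable, not on the cohomology), applying this operator to $L_{\bv{g}}\cdot I^{\cZ}_{\bv{k(g)}} = \iI^{(W,G)}_{\bv{k(g)}}$ yields $L_{\bv{g}}\cdot I^{\cZ}_{\bv{k}} = \iI^{(W,G)}_{\bv{k}}$ for all such $\bv{k}$, giving both existence and uniqueness.

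\emph{The main obstacle} I anticipate is bookkeeping rather than conceptual. One must verify that the raising operator sends the $\bv{k(g)}$-solution space isomorphically onto the $\bv{k}$-solution space, so that the single $L_{\bv{g}}$ determined on the generator is genuinely well-defined and consistent across all $\bv{k}$ with the same product $\bv{g}$; this requires checking that the factorization~\eqref{e:PFred} peeling off the factors corresponding to coordinates fixed by $\jj^k\bv{g}$ matches the vanishing of top powers of $H$ under $i^*$, so that the dimension count $\dim H^*_{\bv{g}}(\cZ) = \deg \cD_{q,\bv{g}}^{irr}$ is exact and the operator indeed intertwines the two complete solution sets. The extra sign $(-1)^{k_0}$ from the $\pm q$ substitution in the QSD formula and the shift $k_0 \mapsto k_0 + 1$ in the FJRW $I$-function must also be tracked carefully to ensure Watson's lemma produces precisely $I^{(W,G)}_{\bv{g}}(t = -q^{-1/d}, z=1)$ as in Equation~\eqref{Fano asymptotic equation}, but these are routine once the structural argument is in place.
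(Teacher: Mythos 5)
Your proposal is correct and follows essentially the same route as the paper: establish $L_{\bv{g}}$ uniquely from the completeness of the components of $I^{\cZ}_{\bv{g}}(q,z=1)$ as solutions of $\cD_{q,\bv{g}}^{irr}=0$ together with $\cD_{q,\bv{g}}^{irr}\cdot\iI^{(W,G)}_{\bv{g}}(u=q^{1/r})=0$, then propagate from $\bv{k(g)}$ to general $\bv{k}$ by applying the raising operator, exactly as in Corollary~\ref{corollary_linear_k}, using that the analogues of~\eqref{e:hopeitsnotwrong} and~\eqref{e:remarkeq} persist in this setting. The paper's own proof is precisely this two-step argument, stated more tersely.
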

\begin{proof}
Since $ \cD_{q,\bv{g}}^{irr}\cdot\mathbb{I}^{(W,G)}_{\bv{g}}(u=q^{1/r})=0$ and the components of $I^{\cZ}_{\bv{g}}(q,z=1)$ are a complete set of solutions to $\cD_{q,\bv{g}}^{irr}=0$, there exists a unique linear transformation $L_{\bv{g}}:H^*_{\bv{g}}(\cZ) \rightarrow \cH_{\bv{g}}(W, G)$ such that \[L_{\bv{g}}\cdot I^{\cZ}_{\bv{g}}(q,z=1)=\mathbb{I}^{(W,G)}_{\bv{g}}(u=q^{1/r}).\] The proof of the second part of the lemma follows an argument identical the one found in Corollary \ref{corollary_linear_k}, note that equations~\eqref{e:hopeitsnotwrong} and~\eqref{e:remarkeq} still hold in this setting.
\end{proof}
\begin{corollary}\label{Fano asymptotic corollary}
Given $\bv{g} \in \bar{G}$, there exists a unique linear transformation $L_{\bv{g}}:H^*_{\bv{g}}(\cZ) \rightarrow \cH_{\bv{g}}(W, G)$ such that
\[
L_{\bv{g}}\cdot I^{\cZ}_{\bv{g}}(q,z=1)\sim I^{(W,G)}_{\bv{g}}(t=-q^{-1/d}, z=1)\text{ as $q\rightarrow\infty$.} 
\]
\end{corollary}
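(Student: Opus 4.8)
The plan is to mirror the architecture of Corollary~\ref{corollary_linear_k} exactly, since the entire apparatus has already been assembled in Lemma~\ref{Lg for LG/Fano} and the surrounding discussion. Three ingredients are now in hand: first, the components of $I^{\cZ}_{\bv{g}}(q,z=1)$ form a complete set of solutions to $\cD^{irr}_{q,\bv{g}}=0$ (because $\deg \cD^{irr}_{q,\bv{g}} = \dim H^*_{\bv{g}}(\cZ)$); second, the holomorphic function $\iI^{(W,G)}_{\bv{g}}(u=q^{1/r})$ is annihilated by the \emph{same} operator $\cD^{irr}_{q,\bv{g}}$; and third, by Watson's lemma we have the asymptotic relation~\eqref{Fano asymptotic equation}. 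The statement of the corollary is essentially a repackaging of Lemma~\ref{Lg for LG/Fano} together with this asymptotic estimate.

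First I would invoke Lemma~\ref{Lg for LG/Fano} to produce the unique linear transformation $L_{\bv{g}}: H^*_{\bv{g}}(\cZ) \to \cH_{\bv{g}}(W,G)$ characterized by $L_{\bv{g}}\cdot I^{\cZ}_{\bv{g}}(q,z=1) = \iI^{(W,G)}_{\bv{g}}(u=q^{1/r})$. Uniqueness and existence are guaranteed precisely because both sides are vectors of solutions to $\cD^{irr}_{q,\bv{g}}=0$ and the source components are a full basis of the solution space, so $L_{\bv{g}}$ is determined by its action on that basis. I would remark that the linear map here agrees with the one built in Lemma~\ref{Lg for LG/Fano}; indeed, evaluating the identity of that lemma on the distinguished index $\bv{k}$ with $\prod_g g^{k_g}=\bv{g}$ corresponding to the full vector $I^{\cZ}_{\bv{g}}$ recovers exactly this relation.

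Next I would apply Watson's lemma in the form already recorded as~\eqref{Fano asymptotic equation}. Composing the defining equation $L_{\bv{g}}\cdot I^{\cZ}_{\bv{g}}(q,z=1) = \iI^{(W,G)}_{\bv{g}}(u=q^{1/r})$ with the asymptotic expansion $\iI^{(W,G)}_{\bv{g}}(u=q^{1/r}) \sim I^{(W,G)}_{\bv{g}}(t=-q^{-1/d},z=1)$ immediately yields
\[
L_{\bv{g}}\cdot I^{\cZ}_{\bv{g}}(q,z=1) \sim I^{(W,G)}_{\bv{g}}(t=-q^{-1/d},z=1) \quad\text{as } q\to\infty,
\]
which is the desired conclusion. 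Since $L_{\bv{g}}$ is a fixed linear transformation it commutes with taking asymptotic expansions, so there is nothing further to check on that front.

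The genuinely delicate point — which I would treat as the main obstacle — is the \emph{completeness/dimension bookkeeping} underpinning the claim that $I^{\cZ}_{\bv{g}}$ supplies a full solution basis. This rests on the identity $\dim H^*_{\bv{g}}(\cZ) = \dim H^*_{\bv{g}}(\cY) - \#\{0\le k\le d-1 \mid \jj^k\bv{g} \text{ fixes a coordinate}\}$ matching $\deg \cD^{irr}_{q,\bv{g}}$, where the factored operator~\eqref{e:PFred} has exactly removed one factor of $zt\frac{d}{dt}$ on the right and one factor $(zt\frac{d}{dt}-kz)$ on the left for each such $k$. One must verify that the pullback $i^*$ to $\cZ$ truly kills precisely the top powers of $H$ on each sector $\PP(G)_{\jj^k\bv{g}}$ and no more, so that the surviving components of $I^{\cZ}_{\bv{g}}$ are linearly independent and exactly saturate the solution space of $\cD^{irr}_{q,\bv{g}}$. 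This is the same subtlety flagged in the $\cX$-to-$\cY$ argument, now transported across quantum Serre duality; I would either verify it directly from the Gamma-function factors of $I^{\cZ}_{\bv{g}}$ (noting where $\Gamma(-dH/z)/\Gamma(-k_0-dH/z)$ forces vanishing) or cite the analogous count already carried out in the toric case, since Lemma~\ref{Lg for LG/Fano} has implicitly assumed it.
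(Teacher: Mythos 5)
Your proof is correct and follows exactly the paper's own route: the paper's proof of this corollary is precisely to compose Lemma~\ref{Lg for LG/Fano} (existence and uniqueness of $L_{\bv{g}}$ intertwining $I^{\cZ}_{\bv{g}}$ with $\iI^{(W,G)}_{\bv{g}}$) with the Watson's lemma asymptotic~\eqref{Fano asymptotic equation}. The completeness-of-solutions bookkeeping you flag as delicate is indeed the load-bearing point, but it belongs to (and is already asserted in) the discussion preceding Lemma~\ref{Lg for LG/Fano}, so your treatment of the corollary itself matches the paper's.
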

\begin{proof}
This result easily follows from Equation \ref{Fano asymptotic equation} and Lemma \ref{Lg for LG/Fano}.
\end{proof}
As a consequence of Lemma \ref{Lg for LG/Fano} we obtain the following result:
\begin{theorem}\label{LG/Fano asymptotic correspondence}
Let $\sum_{j=1}^N c_j -d>0$. Then, there exists a unique linear transformation $L^{GW}:H_{CR}^{amb}(\cZ)\longrightarrow \cH_{FJRW}(W, G)$ such that 
\[
L^{GW}\cdot I^{\cZ}(q,\bt,z=1)\sim I^{(W,G)}(t=-q^{-1/d}, \bt,z=1)\quad\text{as $q\rightarrow\infty$}.
\]
\end{theorem}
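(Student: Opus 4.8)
The plan is to assemble the global correspondence from the coset-by-coset statements already established in Corollary~\ref{Fano asymptotic corollary}. The essential observation is that both $I^{\cZ}(q,\bt,z=1)$ and $I^{(W,G)}(t,\bt,z=1)$ decompose as sums over $\bv{k}\in(\ZZ_{\geq 0})^{\bar{G}}$, with each summand organized according to the $\bar{G}$-coset determined by $\bv{g} = \prod_{g\in\bar{G}} g^{k_g}$. Concretely, collecting the $t^{\bv{g}}$-coefficients partitions each $I$-function into blocks supported on $H^*_{\bv{g}}(\cZ)$ and $\cH_{\bv{g}}(W,G)$ respectively, and these subspaces give direct sum decompositions $H^{amb}_{CR}(\cZ) = \bigoplus_{\bv{g}\in\bar{G}} H^*_{\bv{g}}(\cZ)$ and $\cH_{FJRW}(W,G) = \bigoplus_{\bv{g}\in\bar{G}} \cH_{\bv{g}}(W,G)$.

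First I would define $L^{GW}$ to be the block-diagonal sum $L^{GW} := \bigoplus_{\bv{g}\in\bar{G}} L_{\bv{g}}$, where each $L_{\bv{g}}: H^*_{\bv{g}}(\cZ) \to \cH_{\bv{g}}(W,G)$ is the unique linear transformation produced by Lemma~\ref{Lg for LG/Fano}. By construction this respects the coset decomposition, so applying $L^{GW}$ to $I^{\cZ}(q,\bt,z=1)$ acts blockwise. Using the expansions
\[
I^{\cZ}(q,\bt,z=1) = \sum_{\bv{k}\in(\ZZ_{\geq 0})^{\bar{G}}}\prod_{g\in\bar{G}}\frac{(t^{g})^{k_g}}{k_g!}\, I^{\cZ}_{\bv{k}}(q,z=1),
\]
together with the defining relation $L_{\bv{g}}\cdot I^{\cZ}_{\bv{k}}(q,z=1) = \iI^{(W,G)}_{\bv{k}}(u=q^{1/r})$ valid whenever $\prod_g g^{k_g} = \bv{g}$, I would conclude that
\[
L^{GW}\cdot I^{\cZ}(q,\bt,z=1) = \sum_{\bv{k}\in(\ZZ_{\geq 0})^{\bar{G}}}\prod_{g\in\bar{G}}\frac{(t^{g})^{k_g}}{k_g!}\, \iI^{(W,G)}_{\bv{k}}(u=q^{1/r}).
\]
Taking the power series asymptotic expansion termwise as $q\to\infty$ and invoking the coset-level asymptotics of Corollary~\ref{Fano asymptotic corollary}, each $\iI^{(W,G)}_{\bv{k}}(u=q^{1/r})$ is replaced by $I^{(W,G)}_{\bv{k}}(t=-q^{-1/d},z=1)$, which reassembles into $I^{(W,G)}(t=-q^{-1/d},\bt,z=1)$. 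Uniqueness of $L^{GW}$ follows because each block $L_{\bv{g}}$ is uniquely determined by Lemma~\ref{Lg for LG/Fano}, and a block-diagonal map respecting the coset decomposition is determined by its blocks.

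The main subtlety, as in the proof of Theorem~\ref{theorem_asymptotic_correspondence}, lies in justifying that the asymptotic expansion may be taken termwise across the sum over $\bv{k}$, so that the asymptotics of the full $I$-function genuinely reduce to the established coset-by-coset asymptotics. The $\bt$-variables $\{t^g\}_{g\in\bar{G}}$ are treated as formal parameters, so for each fixed monomial $\prod_g (t^g)^{k_g}$ the coefficient is a finite expression governed by a single $\iI^{(W,G)}_{\bv{k}}$, and the asymptotic relation for that coefficient is exactly Corollary~\ref{Fano asymptotic corollary}. Hence the argument is essentially a bookkeeping repackaging of the coset statements, and the hardest analytic content—the Laplace transform, Watson's lemma, and the matching of Picard--Fuchs systems—has already been carried out in establishing Lemma~\ref{Lg for LG/Fano} and its corollary. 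This mirrors precisely the final assembly step in the proof of Theorem~\ref{theorem_asymptotic_correspondence}.
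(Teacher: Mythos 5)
Your proposal is correct and follows essentially the same route as the paper: the paper likewise defines $L^{GW} := \bigoplus_{\bv{g}\in\bar{G}} L_{\bv{g}}$ using Lemma~\ref{Lg for LG/Fano}, and then repeats verbatim the assembly argument of Theorem~\ref{theorem_asymptotic_correspondence}, substituting Lemma~\ref{Lg for LG/Fano} and Corollary~\ref{Fano asymptotic corollary} for Corollary~\ref{corollary_linear_k}. Your explicit termwise expansion over $\bv{k}$ and the remark that the $\bt$-variables are formal is exactly the content of that assembly step.
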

\begin{proof}
Define $L^{GW}$ to be the block diagonal transformation given by $L^{GW}:=\bigoplus_{g\in\bar{G}}L_{\bv{g}}$. Then the proof is identical to the proof of Theorem \ref{theorem_asymptotic_correspondence} but we use Lemma \ref{Lg for LG/Fano} and Corollary \ref{Fano asymptotic corollary} instead of Corollary \ref{corollary_linear_k}.
\end{proof}
\begin{remark}
One can uniquely recover $ I^{\cZ}(q,\bt,z)$ and $I^{(W,G)}(t, \bt,z)$ from $ I^{\cZ}(q,\bt,1)$ and $I^{(W,G)}(t, \bt,1)$ by using a procedure similar to the one described in Section \ref{z=1}.
\end{remark}
The following result is a natural consequence of Theorem \ref{LG/Fano asymptotic correspondence}: 
\begin{corollary}\label{LG/Fano quantum cohomology}
If $\sum_{j=1}^N c_j -d>0$, the genus zero FJRW theory of the pair $(W,G)$ is completely determined by the by the genus zero Gromov-Witten invariants of $\cZ$. It follows that the FJRW quantum cohomology ring of $(W,G)$ is determined by the quantum cohomology of $\cZ$.
\end{corollary}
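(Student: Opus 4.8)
The plan is to mirror, step for step, the argument used to deduce the corollary following Theorem~\ref{theorem_asymptotic_correspondence}, tracing the chain of determinations that runs from the genus zero Gromov--Witten invariants of $\cZ$ up to the FJRW quantum cohomology ring of $(W,G)$. The logical skeleton is: the invariants of $\cZ$ assemble into the $I$-function $I^{\cZ}$; uniqueness of asymptotic expansion upgrades the relation of Theorem~\ref{LG/Fano asymptotic correspondence} into a genuine determination of $I^{(W,G)}|_{z=1}$ by $I^{\cZ}$; the full $z$-dependence of $I^{(W,G)}$ is then restored by a homogeneity argument; and finally Birkhoff factorization recovers the $J$-function $J^{(W,G)}$, which controls the entire genus zero theory.

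First I would observe that $I^{\cZ}(q,\bt,z)$ is, by its very construction, a cohomology-valued generating function built from the genus zero Gromov--Witten invariants of $\cZ$ restricted to the ambient part $H^{amb}_{CR}(\cZ)$; knowing those invariants is therefore equivalent to knowing $I^{\cZ}$. Next, Theorem~\ref{LG/Fano asymptotic correspondence} furnishes a \emph{unique} linear map $L^{GW}$ with
\[
L^{GW}\cdot I^{\cZ}(q,\bt,z=1)\sim I^{(W,G)}(t=-q^{-1/d},\bt,z=1)\qquad\text{as } q\to\infty.
\]
Since a power series asymptotic expansion is uniquely determined whenever it exists, the right-hand side is completely pinned down by the left-hand side, hence by $I^{\cZ}$. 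Thus $I^{(W,G)}(t=-q^{-1/d},\bt,z=1)$, and with it $I^{(W,G)}(t,\bt,z=1)$, is uniquely determined by the Gromov--Witten invariants of $\cZ$.

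It then remains to pass from the $z=1$ specialization to the full genus zero theory. I would first reintroduce the $z$-dependence of $I^{(W,G)}(t,\bt,z)$ from its value at $z=1$ by the grading operator procedure of Section~\ref{z=1}, adapted to the FJRW state space exactly as indicated in the remark following Theorem~\ref{LG/Fano asymptotic correspondence}; this is a purely formal homogeneity relation requiring no new input. Since $I^{(W,G)}$ is a big $I$-function for the FJRW theory, Lemma~\ref{l:BF} applies and recovers $J^{(W,G)}$ (up to the mirror map) by Birkhoff factorization. As in the Gromov--Witten case of Section~\ref{s:formal}, the $J$-function determines the Dubrovin connection and hence the full genus zero FJRW theory, in particular the FJRW quantum cohomology ring; this yields both assertions of the corollary.

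The genuinely analytic content---the continuation of the regularized $I$-function past its singularities and the Watson's-lemma extraction of the asymptotic expansion---has already been carried out inside Theorem~\ref{LG/Fano asymptotic correspondence}, so no further estimates are needed here. The one step I expect to warrant explicit care is verifying that $I^{(W,G)}$ genuinely satisfies the \emph{big} $I$-function hypothesis, since this is precisely what licenses the use of Lemma~\ref{l:BF}. I anticipate this following from the big property of $I^{\cX}$ together with the differentiation-and-linear-transformation form of the MLK correspondence, but it is the single place where a short direct check, rather than a reference to the preceding machinery, is called for.
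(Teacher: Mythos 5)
Your proposal is correct and follows essentially the same route as the paper's proof: the Gromov--Witten invariants of $\cZ$ determine $I^{\cZ}$, uniqueness of power series asymptotic expansion together with Theorem~\ref{LG/Fano asymptotic correspondence} pins down $I^{(W,G)}$, and Birkhoff factorization (Lemma~\ref{l:BF}) then yields $J^{(W,G)}$ and the genus zero theory. Your explicit treatment of restoring the $z$-dependence and your flag about verifying the \emph{big} hypothesis for $I^{(W,G)}$ are reasonable elaborations of steps the paper handles by a remark and by citation to the MLK correspondence, respectively, but they do not change the argument.
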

\begin{proof}
The genus zero Gromov-Witten invariants of $\cZ$ completely determine the Givental $I$-function $ I^{\cZ}(q,\bt,z)$. As a consequence of Theorem \ref{LG/Fano asymptotic correspondence}, the FJRW $I$-function of the pair $(W,G)$ is uniquely determined by $L^{GW}$ and power series asymptotic expansion at infinity. By means of Birkhoff factorization, the FJRW $I$-function completely determines the FJRW $J$-function and therefore, the genus zero FJRW theory.
\end{proof}
We finish this section by stating a Landau-Ginzburg/general type correspondence. This can be proved in an analogous manner to the Fano case. For the remainder of the section, assume that $\sum_{j=1}^N c_j -d<0$. This condition implies that $\cZ$ is a hypersurface of general type. We then have the following results:
\begin{theorem}\label{LG/gt asymptotic correspondence}
If $\sum_{j=1}^N c_j -d<0$, there exists a unique linear transformation $L^{FJRW}:\cH_{FJRW}(W, G)\longrightarrow H_{CR}^{amb}(\cZ)$ such that 
\[
L^{FJRW}\cdot I^{(W,G)}(t,\bt,z=1)\sim I^{\cZ}(q=t^{-d}, \bt,z=1)\quad\text{as $t\rightarrow\infty$}.
\]
\end{theorem}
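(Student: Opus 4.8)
The plan is to transcribe the proof of the Fano correspondence (Theorem~\ref{LG/Fano asymptotic correspondence}), interchanging the roles of $\cZ$ and $(W,G)$ and reversing the sign of $r$, precisely as Theorem~\ref{theorem_asymptotic_correspondence2} is obtained from Theorem~\ref{theorem_asymptotic_correspondence}. The structural input is already in place: the components of $I^{\cZ}_{\bv{g}}$ and of $I^{(W,G)}_{\bv{g}}$ both solve the single irreducible operator $\cD^{irr}$, written in the variables $q=(-t)^d$ and $t$ respectively. Since now $r<0$, the reduction~\eqref{e:PFred} shows that the order of $\cD_{t,\bv{g}}^{irr}$ equals $\max\!\bigl(d,\sum_j c_j\bigr)-\#\{0\le k\le d-1\mid \jj^k\bv{g}\text{ fixes a coordinate}\}=\dim\cH_{\bv{g}}(W,G)$, so it is now the components of $I^{(W,G)}_{\bv{g}}(t,z=1)$ that form a complete set of solutions, while $I^{\cZ}_{\bv{g}}$ is the formal (zero radius of convergence) series to be recovered by asymptotic expansion.

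First I would regularize the target $I^{\cZ}_{\bv{g}}$: replacing its $q^{H+k_0/d}$ factor by $\tau^{-r(k_0/d+H)}$ and dividing the $k_0$th term by $\Gamma\!\bigl(1-r(k_0/d+H)\bigr)$---whose argument grows because $-r>0$, taming the super-exponential growth of the coefficients---yields a function $I^{\cZ,reg}_{\bv{g}}(\tau)$ holomorphic on a disk about $\tau=0$, with the nilpotent class $H$ treated exactly as $\lambda$ was in Theorem~\ref{theorem_asymptotic_correspondence2}. A computation in the spirit of~\eqref{regularized_equation} shows $I^{\cZ,reg}_{\bv{g}}(\tau)$ satisfies a regularized Picard--Fuchs equation with only regular singularities, at $\tau=0$, $\tau=\infty$, and finitely many nonzero points, so it continues analytically to $\tau=\infty$ along a suitable ray. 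Forming the Laplace transform
\[
\mathbb{I}^{\cZ}_{\bv{g}}(u):=u\int_0^{\infty}e^{-u\tau}I^{\cZ,reg}_{\bv{g}}(\tau)\,d\tau
\]
along a ray avoiding these singularities and applying Watson's lemma gives, after the substitution $u^r=q=t^{-d}$ (equivalently $u=t^{-d/r}$, which tends to $\infty$ as $t\to\infty$ since $r<0$),
\[
\mathbb{I}^{\cZ}_{\bv{g}}(u=t^{-d/r})\sim I^{\cZ}_{\bv{g}}(q=t^{-d},z=1)\qquad\text{as }t\to\infty.
\]
Applying the Laplace identity~\eqref{Laplace_property} iteratively, as in Corollary~\ref{PF-corollary}, converts the regularized equation into $\cD_{t,\bv{g}}^{irr}\cdot\mathbb{I}^{\cZ}_{\bv{g}}(u=t^{-d/r})=0$.

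Because $\mathbb{I}^{\cZ}_{\bv{g}}$ now solves $\cD_{t,\bv{g}}^{irr}=0$ and the components of $I^{(W,G)}_{\bv{g}}(t,z=1)$ are a complete set of solutions, each component of $\mathbb{I}^{\cZ}_{\bv{g}}$ is a unique linear combination of them; this produces a unique linear map $L_{\bv{g}}\colon\cH_{\bv{g}}(W,G)\to H^*_{\bv{g}}(\cZ)$ with $L_{\bv{g}}\cdot I^{(W,G)}_{\bv{g}}(t,z=1)=\mathbb{I}^{\cZ}_{\bv{g}}(u=t^{-d/r})$, the compatibility across all $\bv{k}$ with $\prod_g g^{k_g}=\bv{g}$ following from the analogues of~\eqref{e:hopeitsnotwrong} and~\eqref{e:remarkeq} exactly as in Lemma~\ref{Lg for LG/Fano}. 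Setting $L^{FJRW}:=\bigoplus_{\bv{g}\in\bar{G}}L_{\bv{g}}$ and summing the resulting asymptotics against the weights $\prod_{g\in\bar{G}}(t^{g})^{k_g}/k_g!$ then yields the stated equivalence, by the same block-diagonal assembly as in the proof of Theorem~\ref{theorem_asymptotic_correspondence}.

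The main obstacle is keeping the sign of $r$ straight throughout the analytic apparatus. One must verify that with $r<0$ it is genuinely $I^{(W,G)}_{\bv{g}}$ that converges and $I^{\cZ}_{\bv{g}}$ that is merely asymptotic; that the correct tempering factor is $\Gamma(1-r(k_0/d+H))$ with $-r>0$, rather than the naive $\Gamma(1+r(\,\cdot\,))$, which for $r<0$ would run into the poles of $\Gamma$ and fail to converge; and that the integration ray and the branch of $u=t^{-d/r}$ are chosen so that Watson's lemma applies on the correct angular sector, as governed by the parity of $\sum_j c_j$ exactly as in the blowup theorems. The nilpotent $H$-dependence of $I^{\cZ}_{\bv{g}}$ is a further bookkeeping point, but it is handled verbatim as the $H$-dependence of $I^{\cY}$ in Theorem~\ref{theorem_asymptotic_correspondence2}.
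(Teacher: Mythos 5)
Your proposal is correct and is precisely the argument the paper intends: the paper's own proof of Theorem~\ref{LG/gt asymptotic correspondence} is the single remark that it is ``identical to the proof of Theorem~\ref{LG/Fano asymptotic correspondence} after exchanging the roles of $I^{\cZ}$ and $I^{(W,G)}$,'' which is exactly the role-exchange you carry out. The details you supply beyond the paper's one-line proof --- the count showing $\on{ord}\cD^{irr}_{t,\bv{g}} = d - \#\{0\le k\le d-1 \mid \jj^k\bv{g}\text{ fixes a coordinate}\} = \dim\cH_{\bv{g}}(W,G)$ when $r<0$, the tempering factor $\Gamma(1-r(k_0/d+H))$ with $-r>0$, and the substitution $u=t^{-d/r}\to\infty$ --- all check out against the conventions of Sections~\ref{the correspondence for toric blowups} and~\ref{s:FJRW}.
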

\begin{corollary}
If $\sum_{j=1}^N c_j -d<0$, the genus zero GW theory of $\cZ$ is completely determined by the genus zero FJRW theory of the pair $(W,G)$. It follows that the quantum cohomology of $\cZ$ is determined by the FJRW quantum cohomology ring of the pair $(W,G)$.
\end{corollary}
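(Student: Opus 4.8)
The plan is to transcribe the proof of Theorem~\ref{LG/Fano asymptotic correspondence}, interchanging the roles of $\cZ$ and $(W,G)$ in precisely the way that Theorem~\ref{theorem_asymptotic_correspondence2} is obtained from Theorem~\ref{theorem_asymptotic_correspondence}. The essential structural input is already in place and is insensitive to the sign of $r = \sum_j c_j - d$: the functions $I^{(W,G)}_{\bv{g}}$ and $I^{\cZ}_{\bv{g}}$ are annihilated by the same irreducible Picard--Fuchs operator (the operator $\cD^{irr}_{t,\bv{g}}$ of~\eqref{e:PFred}, respectively its image under $q = (-t)^d$). What changes when $r<0$ is which of the two series converges: a Stirling estimate shows the coefficients of $I^{(W,G)}_{\bv{g}}(t,z=1)$ decay like $k_0^{k_0 r/d}$, so it is now the holomorphic ``source,'' while $I^{\cZ}_{\bv{g}}$ has radius of convergence zero and becomes the divergent ``target.''

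First I would record the relevant degree count. Expanding $\cD_{t,\bv{g}}$ as in~\eqref{e:PFred}, its order equals $\max(d,\sum_j c_j)$, which for $r<0$ is $d$; after removing the $\#\{0\le k\le d-1 : \jj^k\bv{g}\text{ fixes a coordinate}\}$ linear factors we obtain $\deg\cD^{irr}_{t,\bv{g}} = d - \#\{\cdots\} = \dim\cH_{\bv{g}}(W,G)$. Since the components of $I^{(W,G)}_{\bv{g}}(t,z=1)$ already solve $\cD^{irr}_{t,\bv{g}}=0$, they therefore form a complete set of solutions. Next I would regularize the divergent function, defining $I^{\cZ,reg}_{\bv{g}}(\tau)$ by dividing the $k_0$-th term of $I^{\cZ}_{\bv{g}}$ by the appropriate Gamma factor (the analogue of the $\Gamma(1+\tfrac{r}{d}(k_0+1))$ appearing on the Fano side). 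This yields a series convergent on a disk of finite radius and satisfying a regularized equation $\cD^{irr,reg}_{\tau,\bv{g}}=0$ whose only singular points, at $\tau=0$, $\tau=\infty$, and the finite solutions of $(-\tfrac{\tau}{r})^r = d^d\prod_j c_j^{-c_j}$, are regular; hence $I^{\cZ,reg}_{\bv{g}}$ continues analytically to $\tau=\infty$ along rays avoiding these points.

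I would then form the Laplace transform $\iI^{\cZ}_{\bv{g}}(u) := u\int_0^\infty e^{-u\tau}I^{\cZ,reg}_{\bv{g}}(\tau)\,d\tau$ along such a ray. Watson's lemma for a complex variable gives $\iI^{\cZ}_{\bv{g}}(u = t^{-d/r})\sim I^{\cZ}_{\bv{g}}(q=t^{-d},z=1)$ as $t\to\infty$ (the exponent $-d/r>0$, so $u\to\infty$, and this is the substitution matching powers of the two series), while applying~\eqref{Laplace_property} iteratively to $\cD^{irr,reg}_{\tau,\bv{g}}$ --- the computation of Lemma~\ref{lemma_picard_fuchs} and Corollary~\ref{PF-corollary} --- shows $\cD^{irr}_{t,\bv{g}}\cdot\iI^{\cZ}_{\bv{g}}(u=t^{-d/r})=0$. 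By the completeness established above, there is then a unique $L_{\bv{g}}:\cH_{\bv{g}}(W,G)\to H^*_{\bv{g}}(\cZ)$ with $L_{\bv{g}}\cdot I^{(W,G)}_{\bv{g}}(t,z=1)=\iI^{\cZ}_{\bv{g}}(u=t^{-d/r})$; the analogues of~\eqref{e:hopeitsnotwrong} and~\eqref{e:remarkeq} propagate this identity to every $\bv{k}$ with $\prod_g g^{k_g}=\bv{g}$, exactly as in Lemma~\ref{Lg for LG/Fano} and Corollary~\ref{Fano asymptotic corollary}. Setting $L^{FJRW}:=\bigoplus_{\bv{g}\in\bar{G}}L_{\bv{g}}$ and summing the expansions term-by-term over $\bv{k}$, as in the proof of Theorem~\ref{theorem_asymptotic_correspondence}, yields the asserted relation.

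The step I expect to be the main obstacle is the analytic input in the middle: verifying that with the sign of $r$ reversed the regularized $\cZ$-function still has only regular singularities and continues to $\tau=\infty$, and that an admissible ray of integration exists for which Watson's lemma delivers the asymptotics in an angular sector compatible with $t\to\infty$. The positions of the finite singular points and the admissible sector both depend on $r$, so one must check that a direction avoiding $(-\tfrac{\tau}{r})^r = d^d\prod_j c_j^{-c_j}$ and compatible with the required range of $\arg u$ still exists. Granting this, the remainder is a formal transcription of the Fano argument with $\cZ$ and $(W,G)$ exchanged.
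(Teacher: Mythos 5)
Your transcription of the role--swapped Fano argument is essentially correct, and it matches what the paper intends (the paper's proof of both Theorem~\ref{LG/gt asymptotic correspondence} and this corollary is literally ``exchange the roles of $I^{\cZ}$ and $I^{(W,G)}$''). In particular you get the nontrivial bookkeeping right: for $r<0$ the operator $\cD_{t,\bv{g}}$ has order $d$, so after stripping the linear factors in~\eqref{e:PFred} the order of $\cD^{irr}_{t,\bv{g}}$ equals $\dim\cH_{\bv{g}}(W,G)$, which is exactly what makes the components of $I^{(W,G)}_{\bv{g}}(t,z=1)$ a \emph{complete} solution set on this side of the correspondence; the convergence/divergence swap and the Laplace variable $u=t^{-d/r}$ are also correct. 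Your flagged concern about rays of integration and admissible sectors for Watson's lemma is legitimate but is glossed at the same level of detail in the paper's own Fano proof, so I would not count it against you.

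The genuine gap is that what you have proved is Theorem~\ref{LG/gt asymptotic correspondence}, not the corollary. The corollary asserts a \emph{determination} statement (``the genus zero GW theory of $\cZ$ is completely determined by the genus zero FJRW theory,'' and likewise for quantum cohomology), and your argument stops at the asymptotic relation $L^{FJRW}\cdot I^{(W,G)}(t,\bt,z=1)\sim I^{\cZ}(q=t^{-d},\bt,z=1)$. To get from that relation to the corollary one needs the role-swapped version of the proof of Corollary~\ref{LG/Fano quantum cohomology}: (i) the genus zero FJRW invariants determine $I^{(W,G)}$; (ii) because a power series asymptotic expansion is \emph{unique} when it exists, the relation pins down $I^{\cZ}(q,\bt,z=1)$ from $L^{FJRW}\cdot I^{(W,G)}$; (iii) the $z$-dependence of $I^{\cZ}(q,\bt,z)$ is restored by the grading-operator procedure of Section~\ref{z=1}; and (iv) Birkhoff factorization, Lemma~\ref{l:BF}, converts the big $I$-function $I^{\cZ}$ into the $J$-function $J^{\cZ}$, hence into the genus zero ambient theory and the quantum cohomology ring restricted to $H^{amb}_{CR}(\cZ)$. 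None of these steps appear in your proposal --- uniqueness of asymptotic expansions, the $J$-function, and Birkhoff factorization are never mentioned --- so the stated conclusions are not yet established. The fix is short, since all four ingredients are already in the paper, but as written your proof proves the wrong (if central) statement.
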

The proofs of these statements are identical to the proofs of Theorems \ref{LG/Fano asymptotic correspondence} and \ref{LG/Fano quantum cohomology} after exchanging the roles of $I^{\cZ}(q,\bt,z)$ and $I^{(W,G)}(t, \bt,z)$.

\bibliographystyle{plain}
\bibliography{references}

\end{document}